 \newtheorem{thm}{Theorem}[section]
 \newtheorem{lemma}[thm]{Lemma}  
 \newtheorem{definition}[thm]{Definition}
 \numberwithin{equation}{section}
 \newtheorem{proposition}[thm]{Proposition}
 \newtheorem{remark}[thm]{Remark}
\newcommand\smland{\raise.2ex\hbox{$\scriptstyle\land$}\hspace{.01cm}}
\newcommand\smlor{\raise.1ex\hbox{$\scriptstyle\lor$}}
\def\diam{{\rm diam}}
\def\dist{{\rm dist}}
\def\supp{{\rm supp \,}}
\def \rdist {{\rm \, rdist}}
\def \inrdist {{\rm \, inrdist}}
\def \ec {{\rm ec}}
\def \child{{\rm ch}}
\def \ch{{\rm ch}}
\def\Im{{\rm Im}\,}
\def\Re{{\rm Re}\,}
\def\card{{\rm card}\,}
\def \sign{{\rm sign}}
\begin{document}
\title[Sparse domination on non-homogeneous spaces]{Sparse domination  
\\ of singular bilinear forms \\ on non-homogeneous spaces 
}
\author{Paco Villarroya}
\address{Department of Applied Mathematics, Santa Clara University, Santa Clara, CA 95053}
\email{paco.villarroya@scu.edu}

\subjclass[2020]{42B20, 42C40, 47G10, 28C05}
\keywords{Calder\'on-Zygmund operator, sparse operator, non-doubling measures}

\begin{abstract}
We introduce a new sparse $T1$ theorem that estimates the dual pair associated with a Calder\'on-Zygmund operator by a sub-bilinear form supported on a sparse family of cubes.  

The main result in the paper improves previous sparse $T1$ theorems in several ways:
it applies to non-homogeneous measures of power growth, it only requires a numerable family of testing conditions, 
and it can be used to prove boundedness of Calder\'on-Zygmund operators on weighted spaces for a class of weights larger than the Muckenhoupt $A_p$ weights. 
\end{abstract}

\maketitle

\section{Introduction}

The term sparse domination refers to a collection of methods to estimate a singular, signed, non-local operator by  another operator that is non-singular, positive and localized on a sparse family of cubes. 

In 2013 Lerner published the first work on sparse domination (\cite{L2013}, and  \cite{L2013-2}) as a way to obtain a simplified proof of the $A_2$ Theorem \cite{H2012}, which is a result about sharp estimates for Calder\'on-Zygmund operators on weighted spaces. 
In particular, he proved that the $L^p$-norm of a Calder\'on-Zygmund operator can be dominated by the $L^p$ norm of a sparse operator of the form 
\begin{equation}\label{normpoint}
Sf=\sum_{Q\in \mathcal S}\langle |f|\rangle_Q \mathbbm{1}_Q,  
\end{equation}
where $\langle f\rangle_Q=|Q|^{-1}\int_Q f(x) dx$, $|Q|$ is the volume of a cube $Q$, and $\mathcal S$ is a sparse collection of cubes. 
In the next few years, the initial sparse estimates in norm were replaced with pointwise estimates by sparse operators as in \eqref{normpoint} (see \cite{CAR2016},  \cite{L2016}, \cite{L2017}, \cite{LN2019}). 
By 2017 Lacey and Mena introduced in \cite{LM2017} the notion of 
sparse domination estimates in terms of bilinear forms. This consists on estimating the modulus of the dual pair $\langle Tf,g\rangle$ associated with a singular integral operator $T$ by a sparse bi-sublinear form such as 
\begin{equation*}
\sum_{Q\in \mathcal S}
\langle |f|\rangle_Q
\langle |g|\rangle_Q |Q|. 
\end{equation*}
This new approach allowed the possibility to prove sparse estimates for a long list of operators including: Bochner–Riesz operators (16, \cite{LMR2019}), spherical maximal functions \cite{L2019},
modulation invariant singular integrals such as the bilinear Hilbert transform \cite{CPO2018} and Carleson-type operators \cite{B2018}, singular Radon transforms (\cite{CO2018}, \cite{H2020}), and pseudo-differential operators \cite{BC2020}.

The goal of the current paper is to further develop the sparse $T1$ theory in three directions. 
First, 
the sparse $T1$ Theorem in Lacey and Mena's seminal paper \cite{LM2017}, and in much of the work that followed, 
applies to operators and spaces defined via the Lebesgue measure.
Our aim is to go beyond the constraint of doubling measures and obtain a result that can be used in the setting of non-homogeneous spaces. 

Second, most papers in the literature on singular operators with non-doubling measures make use of random grids. This is the case even for results prior to the development of sparse estimates (see \cite{NTV1997}). Randomization is a very useful technique that largely simplifies the analysis of such singular operators. However, the method requires a-priori knowledge on how the operator under study 
behaves over a non-numerable class of testing functions, typically the class of characteristic functions of all tensor-product cubes of $\mathbb R^n$.  
Since our proof approach does not rely on random grids, 
we are able to strengthen the final result by providing weaker testing hypotheses, namely, the use of a numerable (sparse) family of testing functions. This is of importance when dealing for example with operators associated with fractal measures. 

Finally, thanks to a careful control of the constants appearing in our estimates, we provide a new sparse domination result that can be used to prove boundedness on weighted spaces for weights more general than the classical Muckenhoupt $A_p$ weights. 
This allows, for example, to recover known results on boundedness of the double and single layer potential operators associated with boundary value problems for degenerate elliptic equations in divergence form, ${\rm div} (A\nabla u)-V\cdot u = 0$, with appropriate non-negative potentials $V$ (see \cite{BEO} for the case of Schr\"odinger operators). 


The paper is structured as follows. 
In section 2, we introduce some notation and the statement of the main result Theorem \ref{mainresult}.
The next four sections contain some preliminary work for its proof: properties of measures of power growth (section 3); 
two Haar wavelet systems (section 4); a stopping time argument to select the appropriate sparse family of cubes (section 5); 
estimates of a Calder\'on-Zygmund operator acting on Haar wavelets, and appropriate square functions and paraproducts (section 6). 


Sections 7 to 11 are devoted to the proof of the main result. In section 7, we estimate the dual pair $\langle Tf, g\rangle$ by paraproducts and bilinear forms acting on square functions. In section 8,  we prove a restricted estimate of the paraproducts by sparse forms, and also estimate the contribution of close cubes with large eccentricity by a trivial sparse form. This is the part of the operator that, in the classical literature, is handled with random grids. 
Meanwhile, in section 9 we use an iterative process to extend the estimates of section 8 to its full generality. 
In section 10, we prove that the square functions define operators bounded on $L^2(\mu)$ and from $L^1(\mu)$ into weak $L^1(\mu)$. 
We end the paper with section 11, where we use previous boundedness results to define a recursive process that estimates the bilinear forms on square functions by a sparse sub-bilinear form. 

\section{Notation, definitions, and statement of main result}

Let ${\mathcal C}$ be the family of cubes in $\mathbb R^n$ that are tensor products of intervals of the same length, that is,  cubes with sides parallel to the coordinate axes. We note that the intervals can be open, closed, or half-open, defining in each case different cubes.  
For each cube $I\in \mathcal C$, we denote its center by $c(I)$, its side length by $\ell(I)$, and its boundary 
by
$\partial I$.

Let $\mathcal D_0$ be the family of dyadic cubes  $I=\prod_{i=1}^{n}2^{-k}[j_{i},j_{i}+1)$, 
and $\tilde{\mathcal D}_0$ be the family of open dyadic cubes $I=\prod_{i=1}^{n}2^{-k}(j_{i},j_{i}+1)$ in both cases with $j_i, k\in \mathbb Z$. 

Let $\lambda_1,\lambda_2,\ldots, \lambda_n \in \mathbb R$ such that $\lambda_i\in \mathbb R\setminus (\cup_{j=1}^{i-1}(\lambda_j+\mathbb Q))$, and 
let $a_i=\lambda_i (1,\ldots, 1)\in \mathbb R^n$. 
For $i\in\{ 1,\ldots, n\}$, 
we define the grids of cubes
\begin{equation}\label{grids}
\mathcal{D}_i=a_i+\mathcal{D}_1=\{ a_i+I :  
I\in \mathcal{D}_0\},
\end{equation}
where $a_i+I\in \mathcal{C}$ is uniquely defined by translation. 
We define $
\tilde{\mathcal{D}}_i$ in a similar way but using $\tilde{\mathcal{D}}_0$ instead.

In large parts of the paper, we write an arbitrary choice 
of grid $\mathcal{D}_i$ simply as $\mathcal D$, and a 
choice  
of grid $\tilde{\mathcal{D}}_i$ simply by $\tilde{\mathcal D}$. We hope this license will not cause any trouble. This is the case, for example, in the following definitions. 
For $I\in \mathcal D$ (resp. $I\in \tilde{\mathcal D}$), we define the following collections of cubes: 
\begin{itemize}
\item The children of $I$, denoted by $\child(I)$, are  
the cubes $I'\in \mathcal D$ such that $I'\subset I$ and $\ell(I')=\ell(I)/2$. If $I',I''\in \child(I)$, we say that $I'$ and $I''$ are siblings. 
\item The parent of $I$, denoted 
by $\widehat{I}$, is the 
smallest cube in $\mathcal D$ such that $I\subsetneq \widehat{I}$. Note that 
$I\in \child(\hat{I})$.
\item The $k$-th generation descendants of $I$, denoted by $\child_k(I)$, are  
the cubes $I'\in \mathcal D$ such that $I'\subset I$ and $\ell(I')=\ell(I)/2^k$. We note that $\child(I)=\child_1(I)$.  
\item The friends of $I\in \mathcal D$, denoted by
$I^{\rm fr}$, are the $3^n$ cubes $J\in {\mathcal D}$ such that $\ell(J)=\ell(I)$ and $\dist(I,J)=0$, 
where $\dist(I,J)$ denotes the set distance between $I$ and $J$.
\item For each $I\in \mathcal D$ and $r>0$, the cube $rI\in \mathcal C$ is defined by dilation so that $c(rI)=c(I)$, $\ell(rI)=r\ell(I)$. 
\end{itemize}

Let $\mu $ be a positive Radon measure on $\mathbb R^n$.
Given a grid $\mathcal D$, 
and  a $\mu$-measurable set $\Omega \subset \mathbb R^{n}$, 
we denote by 
$\mathcal D(\Omega )$ the collection of cubes 
$I\in \mathcal D$ such that $\widehat{I}\subsetneq \Omega$. 
Given $Q\in \mathcal C$, 
we also write $\mathcal{D}(Q)_{\geq N}=\{ I\in \mathcal{D}(Q)/ \ell(I)\geq 2^{-N}\ell(Q)\}$.

Given $I,J\in \mathcal{C}$,
if $\ell(J)\leq \ell(I)$ we write $I\smland J=J$, $I\smlor J=I$,
while if $\ell(I)<\ell(J)$, we write
$I\smland J=I$, $I\smlor J=J$. 
We then define the eccentricity and the 
relative distance of $I$ and $J$ as
$$
\ec(I,J)=\frac{\ell(I\smland J)}{\ell(I\smlor J)},
\hskip20pt 
\rdist(I,J)=
\frac{\dist(I,J)}{\ell(I\smlor J)}.
$$

We define
the inner boundary of $I$ as 
${\displaystyle \mathfrak{B}_{I}=
\cup_{I'\in \child(I)}\partial I'}$, and 
the inner relative distance 
of $I$ and $J$ by
$$
\inrdist(I,J)=\frac{\dist(I\smland J,{\mathfrak B}_{I\smlor J})}{\ell(I\smland J)}.
$$
With this we also define the following quantity: for $0<\theta <1$, let  
$$
\lambda(I,J)=\ec(I,J)^\theta(1+\inrdist(I,J)),
$$
 which is used to control nearby close cubes with very different side lengths.

Let $\langle I,J\rangle$ be the only closed cube in $\mathcal C$ containing $I\cup J$ with the smallest possible side length and 
such that  $\sum_{i=1}^{n}c(I)_{i}$ is minimum. Finally, we denote the cardinality of a set $A$ by $\card(A)$.


\begin{definition}
Let $\mu $ be a positive Radon measure on $\mathbb R^n$.
For any locally integrable function $f$ and $I\in \mathcal C$, we define
$\langle f\rangle_I=\mu(I)^{-1}\int_{I}f(x)d\mu(x)$ if 
$\mu(I)\neq 0$, while $\langle f\rangle_I=0$ otherwise.

\end{definition}

\begin{definition}
Given a positive Radon measure $\mu$, 
a collection of cubes $\mathcal{S}\subset \mathcal C$ is sparse if there is a constant $0<C<1$ such that for every $S \in \mathcal{S}$ 
$$
    \sum_{\substack{S'\in \mathcal{S}\\ S'\subset S}} \mu(S')\leq C\mu(S).
$$ 


 
Let $\mathcal S$ be a sparse collection of cubes $\mathcal S$, and $a= (a_{S})_{S\in \mathcal S}$ be 
a non-negative bounded sequence. 
We define the following sparse sub-bilinear form:
\begin{equation}\label{largerform}
\Lambda_{\mathcal S}(f,g)=\sum_{S\in \mathcal S}a_S
\langle |f|\rangle_{S}\langle |g|\rangle_{S}\mu(S). 
\end{equation}
\end{definition}



\begin{definition}\label{powergrowth}
Let $\mu $ be a positive Radon measure on $\mathbb R^n$, and $0<\alpha \leq n$. 
We say that $\mu$ has power growth $\alpha $ if 
$\mu(I)\lesssim \ell(I)^\alpha$ for 
all $I\in \mathcal{C}$, or at least for an appropriate sub-family of cubes in $\mathcal C$. 
\end{definition}

\begin{definition}\label{suitable}
We define the density of a measure with power growth as
\begin{equation}\label{suitable2}
\rho(I)=\sup_{\substack{t\in I\\ 0<r<\ell(I)}}\hspace{-.3cm}
 \frac{\mu(I\cap B(t,r))}{r^{\alpha}}\, +\, \sum_{m\geq 1}\frac{\mu(mI)}{\ell(mI)^{\alpha }}
\frac{1}{m^{\frac{\delta}{2}+1}}. 
\end{equation}
\end{definition}

\begin{definition}\label{prodCZoriginal}
Let $\mu$ be a positive Radon measure on $\mathbb R^n$ with power growth $0<\alpha \leq n$.

A function $K:(\mathbb R^{n}\times \mathbb R^{n}) \setminus 
\{ (t, x)\in \mathbb R^{n}\times \mathbb R^{n} : t=x\} \to \mathbb C$ is a Calder\'on-Zgymund kernel if 
it is bounded on compact sets of its domain
and
there exist $0<\delta \leq 1$ 
and bounded functions $L,S,D:[0,\infty)\rightarrow [0,\infty )$ 
satisfying 
}{(k')}^{-2(\delta +\frac{n}{r_{1}'})}
\Big)^{\frac{1}{2}}d\mu(x).
\end{align}

For the first factor, we 
reason as follows: 
since $k\geq 2^{\theta e}$,  and $\mu(R)\geq \mu(I)$, we have
\begin{align*}
\sum_{k=2^{\theta e}}^{2^{e}}k^{-\delta}
&\sum_{R\in \{I,\hat{I}\}}
\sum_{J\in I_{-e,1,k}}
\frac{\mu(R\cap J)}{\mu(R)}
\frac{\mathbbm{1}_{J_I'}(x)}{\mu(J_I')}
\\
&\leq 2^{-\theta \delta e}\frac{1}{\mu(I)}
\sum_{R\in \{I,\hat{I}\}}
\sum_{k=2^{\theta e}}^{2^{e}}
\sum_{J\in I_{-e,1,k}}
\frac{\mu(R\cap J)}{\mu(J_I')}\mathbbm{1}_{J_I'}(x)
=2^{-\theta \delta e}\frac{1}{\mu(I)}\Phi_I(x).
\end{align*}
with the obvious definition for $\Phi_I$. 
Moreover, we note that $\mu(R\cap J)\neq 0$ only if 
$J\subset R\subset \widehat{I}$.

For the second factor in \eqref{decompnested}, 
we first note that for each cube $J\in \cup_{k=2^{\theta e}}^{2^{e}}I_{-e,1,k}$ with $J\subset I$ there are up to $2n$ cubes $I'\in \mathcal D$ such that $J_{I'}=J$.  Moreover, $\mu_{\epsilon}(J_I')\geq \mu(J_I')=\mu(J)$. 
Then 
we use the reasoning previous to the calculation in \eqref{21k} 
and the fact that $I$ is essentially determined by $J$ to eliminate the sums in $k$ and $I$:
\begin{align*}
\sum_{k=2^{\theta e}}^{2^{e}}k^{-\delta}
\sum_{I\in J_{e,1,k}}\frac{\mathbbm{1}_{J'_I}(x)}{\mu_{\epsilon}(J'_I)}
\lesssim 
k_J^{-\delta}
\frac{\mathbbm{1}_{J'}(x)}{\mu(J')}  
\end{align*}

With both things and $k_J\geq 2^{\theta e}\geq 2^{\theta \delta e}$, 
we have
\begin{align*}
B_3(f,g)
&\lesssim 
2^{-\theta \delta e}\int 
\Big( \sum_{I\in {\mathcal D}}
a_{I}
|\langle f,h_{I}\rangle |^2
\frac{1}{\mu(I)}\Phi_{I}(x)
\Big)^{\frac{1}{2}}
\Big( \sum_{J\in \mathcal D}b_{J}
|\langle g,h_{J}\rangle |^2
\frac{\mathbbm{1}_{J'}(x)}{\mu(J')}
\Big)^{\frac{1}{2}}d\mu(x)
\\
&=2^{-\theta \delta e}\int  S_{3}f(x)S_{3}^{-1}g(x)d\mu(x).
\end{align*}

Similar estimate holds for $N_2$ and so, 
\begin{align*}
N_1+N_2&\leq \sum_{e\in \mathbb Z}B_3(f,g)
\lesssim  \sum_{e\geq 0}2^{-\theta \delta e}\langle S_{3}^1f, S_{3}^{-1}g\rangle 
+\sum_{e\leq 0}2^{\theta \delta e}\langle S_{3}^{-1}f, S_{3}^1g\rangle . 
\end{align*}

We stop the proof of Proposition \ref{dualbysquare} here even though it is not completed. On the one hand, we still need to deal with two more terms:

{\bf Paraproducts (or the constant part of nested cubes)}.
The paraproducts $\Pi_i (f,g)$, are not estimated by square functions. Instead, we prove in the next section that they can be directly estimated by a sparse form without previous estimate by a square form. 

{\bf Essentially attached cubes.} The last term in \eqref{estsquare}, which is associated with close cubes with large eccentricity, is also directly estimated by a sparse form. This is done in the next section. 

In addition, we note that all the work done so far only involves the use of one dyadic grid and, as we will later see, only applies to the first two terms in \eqref{fdecom}: $\langle TP_{Q}^Nf,g_1\rangle $ and $\langle Tf_1,g_{1,\partial}\rangle $.
To deal with the third term in \eqref{fdecom}, namely, $\langle Tf_{1,\partial},g_{1,\partial}\rangle $, 
we need an iteration process that requires up to $k$ different grids. 
Only after that work, the proof is completed. 

\section{Paraproducts and essentially attached cubes.}

\subsection {Paraproducts}  
The following proposition shows how to estimate the paraproducts by a sparse form. 
\begin{proposition}\label{estimate4paraproducts} For $i\in \{ 1, \cdots, k\}$ and $j\in \{1,2\}$, let $\Pi_j^i (f,g)$ be the paraproducts  as given in Definition \ref{paraproducts}. Then 
$$
|\Pi_j^i (f,g)|\lesssim \Lambda_{S}(|f|,|g|). 
$$
\end{proposition}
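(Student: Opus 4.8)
The plan is to estimate a single paraproduct, say $\Pi_1^i(f,g)$, the argument for $\Pi_2^i$ being symmetric after exchanging the roles of $f$ and $g$. I would first recall the definition of the alternative Haar system: since $h_I^{\widehat J,S}$ is constant on $S$ and equals $\mu(I)^{1/2}(\mathbbm 1_I(c(J))/\mu(I)-\mathbbm 1_{\widehat I}(c(J))/\mu(\widehat I))\mathbbm 1_S$, the pairing $\langle Th_I^{\widehat J,S},h_J\rangle$ is a scalar multiple of $\langle T\mathbbm 1_S,h_J\rangle$ (or, after using the kernel representation and the fact that $h_J$ has mean zero, of an average of $T\mathbbm 1_S$ against $h_J$). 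The key structural point is that the inner sum over $J\in\mathcal D(\widehat I)$ with $\lambda_\theta(\widehat I,\widehat J)>1$, weighted by $\langle g,h_J\rangle$, reassembles — via Lemma \ref{densityinL1}, equation \eqref{Deltaincoord2} — into a telescoping sum of averaging operators $E_J^{\widehat I,S}g$, so that $\Pi_1^i(f,g)$ collapses to something of the form $\sum_I \langle f,h_I\rangle\, c_I\, \langle g\rangle_{\widehat I}$ with coefficients $c_I$ controlled by $a_I=\sup_J F(I,J)$ times $\mu(I)^{1/2}/\ell(I)^\alpha$ or similar, exactly the type of paraproduct coefficient that appears in the square function $S_{2,i}$ or in the classical Carleson-embedding setup.

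Next I would organize the cubes $I$ by the stopping cubes $S\in\mathcal S$ with $I\in\mathcal U(S)$: within each $\mathcal U(S)$ one has the crucial averaging bound $\langle |f|\rangle_{\widehat I}\lesssim C\langle|f|\rangle_S$ and $\langle|g|\rangle_{\widehat I}\lesssim C\langle|g|\rangle_S$ coming from the stopping condition \eqref{selectiondef} (cubes in $\mathcal U(S)$ have not triggered a new stopping cube, so their ancestors' averages are comparable to those on $S$). This replaces the averages at scale $I$ by the averages at scale $S$, at the cost of the absolute constant $C$. What remains inside $\mathcal U(S)$ is then a sum $\sum_{I\in\mathcal U(S)}|\langle f,h_I\rangle|\,|\text{coeff}_I|$; here I would invoke a Carleson-embedding / $L^1$ Carleson estimate — the coefficients $|\text{coeff}_I|$ satisfy a packing condition $\sum_{I\subset R,\,I\in\mathcal U(S)}|\text{coeff}_I|^2\lesssim \mu(R)$ because of the geometric decay in $F$ (the factors $2^{-k\delta}$, $m^{-(\alpha+\delta)}$ etc. in Proposition \ref{twobumplemma1}) together with the testing hypothesis \eqref{restrictcompact1-2bddness}, which is exactly what bounds $\|\mathbbm 1_S T\mathbbm 1_S\|$ by $\mu(S)^{1/2}F_T(S)$ — and conclude $\sum_{I\in\mathcal U(S)}|\langle f,h_I\rangle|\,|\text{coeff}_I|\lesssim \langle|f|\rangle_S\,\mu(S)^{1/2}\cdot(\text{Carleson const})^{1/2}$, or more precisely $\lesssim \langle|f|\rangle_S\langle|g|\rangle_S\mu(S)$ after also absorbing the $g$-average. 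Summing over $S\in\mathcal S$ and setting $a_S$ to absorb the bounded factor $F_T(S)$ (which is bounded by hypothesis) and the bounded kernel functions $L,S,D$ yields $\sum_{S\in\mathcal S}a_S\langle|f|\rangle_S\langle|g|\rangle_S\mu(S)=\Lambda_{\mathcal S}(|f|,|g|)$.

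The main obstacle, and the step I would spend the most care on, is verifying the Carleson packing condition for the paraproduct coefficients in the non-homogeneous setting: one cannot use the $A_\infty$/doubling machinery, so the packing has to come purely from the kernel decay in Proposition \ref{twobumplemma1} summed against the density $\rho$ and the power-growth bound $\mu(I)\lesssim\ell(I)^\alpha$, and from controlling the "constant part" $\langle T\mathbbm 1_S,h_J\rangle$ via the testing condition \eqref{restrictcompact1-2bddness} rather than via an a priori $L^2$ bound on $T$ (this is essential since the final constant must be independent of $\|T\|$). A secondary technical point is the reassembly via \eqref{Deltaincoord2}: one must check that the telescoping of $\sum_J\langle g,h_J\rangle h_I^{\widehat J,S}$ over the restricted index set $\{J:\lambda_\theta(\widehat I,\widehat J)>1\}$ still produces a clean average (the restriction cuts off small, nearby $J$, which is precisely the part already handled by the nested-cube square function $S_{3,i}$, so the leftover is either a genuine average at scale $\widehat I$ or a harmless boundary term), and that the switch from $\mu$ to $\tilde\mu$ and back (Lemma \ref{L1-L2}) does not affect these estimates since all cubes involved lie inside $Q'$ where $\tilde\mu=\mu$.
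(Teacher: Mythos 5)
Your overall strategy — reorganize by the stopping cubes $\mathcal U(S)$, invoke the testing condition \eqref{restrictcompact1-2bddness}, absorb the bounded kernel factors into $a_S$ — matches the paper's in outline, but there are two concrete structural errors that would make the argument fail as written.

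First, you telescope in the wrong variable. In $\Pi_1^i(f,g)=\sum_I\sum_J\langle f,h_I\rangle\langle g,h_J\rangle\langle Th_I^{\widehat J,S},h_J\rangle$, the alternative wavelet $h_I^{\widehat J,S}$ is indexed by $I$ and is a scalar multiple of $\mathbbm 1_S$ with coefficient depending only on $I$ and the point $c(\widehat J)$. It is therefore the $I$-sum (over the larger cubes $\widehat I\supset\widehat J$) that telescopes via \eqref{Deltaincoord2}, after adding and subtracting the complementary range $\lambda(\widehat I,\widehat J)\le 1$, yielding $\sum_I\langle f,h_I\rangle h_I^{\widehat J,Q}=\sum_{R\in\child(\widehat J)}\alpha_R\langle f\rangle_R\mathbbm 1_Q$. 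The collapsed form is thus $\sum_J\langle f\rangle_{J_j}\langle g,h_J\rangle\langle T\mathbbm 1_Q,h_J\rangle$, where $g$ keeps its Haar coefficients and $f$ appears only as averages at scales comparable to $J$. Your claimed collapse to $\sum_I\langle f,h_I\rangle\,c_I\,\langle g\rangle_{\widehat I}$ telescopes the $J$-sum, but there is no mechanism for that: $h_J$ is the ordinary Haar wavelet paired against $T(h_I^{\widehat J,S})$, and the coefficient $\langle Th_I^{\widehat J,S},h_J\rangle$ does not factor as something independent of $J$ times a quantity that telescopes in $J$.

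Second, once you have $\sum_J\langle f\rangle_{J_j}\langle g,h_J\rangle\langle T\mathbbm 1_Q,h_J\rangle$ grouped by $J\in\mathcal U(S)$, you cannot apply the testing condition directly, because \eqref{restrictcompact1-2bddness} bounds $\mathbbm 1_S T\mathbbm 1_S$, not $T\mathbbm 1_Q$ tested against $h_J$ with $J$ far below scale $Q$. The paper splits $\mathbbm 1_Q=\mathbbm 1_S+\mathbbm 1_{Q\setminus S}$ per stopping cube: the term $\langle T\mathbbm 1_S,h_J\rangle$ is handled by testing together with the key $L^2$ Carleson-type bound $\|\sum_{J\in\mathcal U(S)}\langle g,h_J\rangle h_J\|_{L^2(\mu)}\lesssim\langle g\rangle_S\mu(S)^{1/2}$ (proved by writing the projection as five explicit pieces from the telescoping and the sparse structure, not by a packing condition on abstract coefficients); the far term $\langle T\mathbbm 1_{Q\setminus S},h_J\rangle$ is handled separately by the kernel decay estimate showing $\sum_{J\in\mathcal U(S)}|\langle T\mathbbm 1_{Q\setminus S},h_J\rangle|^2\lesssim a_S^2\mu(S)$. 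Without this split, your invocation of the testing condition does not apply, and the "Carleson packing" step remains unjustified. A further minor confusion is that you identify the coefficient scale of the collapsed paraproduct with that of $S_{2,i}$ — but the whole point of separating the paraproducts in Proposition \ref{dualbysquare} is precisely that they cannot be absorbed into the square-function estimates and require the testing hypothesis instead.
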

\begin{proof}
By symmetry we only work with $\Pi_1^i (f,g)$, which we simply denote by $\Pi (f,g)$ and rewrite 
as follows: since each cube $J\in \mathcal D(Q)$ satisfies that $J\in \mathcal U(S)$ for some $S\in \mathcal S$, we have 
\begin{align}\label{para2}
\nonumber
\Pi(f,g)
&=
\sum_{I\in \mathcal D(Q)}
\sum_{\substack{J\in \mathcal D\\ \widehat{J}\subset \widehat{I}, 
\lambda(\widehat{I},\widehat{J}\, )>1}
}
\langle f, h_{I}\rangle \langle g, h_{J}\rangle \langle Th_{I}^{\widehat{J},Q},h_{J}\rangle
\\
&
=\sum_{S\in \mathcal S}
\sum_{J\in \tilde{\mathcal{U}}(S)\cap \mathcal D} \langle g,h_{J}\rangle
\Big\langle T\big( \hspace{-.5cm}
\sum_{\substack{I\in \mathcal{D}(Q)\\ 
\widehat{J}\subset \widehat{I}, 
\lambda(\widehat{I},\widehat{J}\, )>1}}
\langle f, h_{I}\rangle h_{I}^{\widehat{J}, Q}
),h_{J}\Big\rangle .
\end{align}
We have used that conditions $I\in \mathcal D(S)$, and $\widehat{J}\subset \widehat{I}$ with 
$\lambda(\widehat{I},\widehat{J}\, )>1$ imply that $J_j\subset \widehat{I}\subset S$ and so, 
$J\in {\mathcal{U}}(S)$. 

We can assume that the first sum in \eqref{para2} only contains terms for which $\mu(J)\neq 0$ since otherwise $h_J\equiv 0$. 

Moreover, $\widehat{J}\subset \widehat{I}$ implies  
$\ell(J)<\ell(I)$.
More importantly, fixed $J\in \mathcal D$, for each cube $I$ satisfying the condition $\widehat{J}\subset \widehat{I}$, all cubes $I'\in \child{(\widehat{I}\,)}$ also satisfy the same condition. In other words, for each cube $I$ in the sum, all its brothers are also in the sum. The same holds for the cubes $J$, but we will not make use of such property. 

With both facts, 
by \eqref{Deltaincoord2} in Lemma \ref{densityinL1},   
we get 
$$
\sum_{I\in \child(\hat{I})}
\langle f,h_{I}\rangle 
h_{I}^{\hat{J},Q}
=\Big(\sum_{I\in \child(\hat{I})}E_{I}^{\hat{J},Q}f\Big)-E_{\widehat{I}}^{\hat{J},Q}f,
$$
where we remind that 
$E_{I}^{\widehat{J},Q}f=\langle f\rangle _{I}\mathbbm{1}_{I}(c(\widehat{J}\,))\mathbbm{1}_{Q}$.

The inner sum in \eqref{para2} takes place under the condition 
$\lambda(\widehat{I},\widehat{J}\,)>1$. 
Then, 
for $\widehat{J}\in \mathcal{D}(Q)$, 
let 
$J_{\lambda}\in \mathcal{D}(Q)$ 
be the smallest cube such that 
$\widehat{J}\subset J_{\lambda}$ and $\lambda(\widehat{I},\widehat{J_\lambda }\,)>1$.  
If such cube does not exist, we then define 
$J_{\lambda}=\emptyset $. 

We now add and subtract the following term: 
\begin{align*}
G=
&\sum_{\substack{I\in \mathcal D(Q)\\ 
\widehat{J}\subset \widehat{I},\lambda(\widehat{I},\widehat{J}\,)\leq 1}}
\langle f, h_{I}\rangle 
h_{I}^{\widehat{J}, Q}, 
\end{align*}
which we call a partial paraproduct, 
to obtain 
\begin{align*}
\sum_{\substack{I\in \mathcal D(Q)\\ 
\widehat{J}\subset \widehat{I}, \lambda(\widehat{I},\widehat{J}\,)>1}}
\langle f, h_{I}\rangle 
h_{I}^{\widehat{J}, Q}
=\sum_{\substack{I\in \mathcal D(Q)\\ 
\widehat{J}\subseteq \widehat{I}}}
\langle f, h_{I}\rangle 
h_{I}^{\widehat{J}, Q}-G. 
\end{align*}
The second term $-G$, together with a symmetric expression containing cubes such that $\widehat{I}\subset \widehat{J}$ will be estimated in the next subsection and so, we now focus on the first term. 

By summing a telescopic sum, we have for $J\in {\mathcal D}(Q)$ fixed and $M\in \mathbb N$ such that $2^M>\ell(Q)$, 
\begin{align*}
\sum_{\substack{I\in \mathcal D(Q)\\ 
\widehat{J}\subseteq \widehat{I}}}
\langle f, h_{I}\rangle 
h_{I}^{\widehat{J}, Q}
&=\sum_{\substack{\widehat{I}\in \mathcal D(Q)\\ 
\widehat{J}\subseteq \widehat{I}}}
\sum_{I'\in \child(\hat{I})}
\langle f, h_{I'}\rangle 
h_{I'}^{\widehat{J}, Q}
=\sum_{\substack{\widehat{I}\in \mathcal D(Q)\\ 
\widehat{J}\subseteq \widehat{I}}}
\Big(\sum_{I\in \child(\hat{I})}\bar E_{I}f\Big)-\bar E_{\widehat{I}}f
\\
&=\sum_{R\in \child(\widehat{J}\, )}
\bar E_{R}f
-\bar E_{Q}f
=\sum_{R\in \child(\widehat{J}\,)}
\alpha_{R}\langle f\rangle _{R}
\mathbbm{1}_{Q},
\end{align*}
where $\alpha_{R}=1$ if $R\in \child(\widehat{J}\,)$ and 
$\alpha_{R}=-1$ if $R=Q$. 
The cardinality of $\child(\widehat{J}\,)$ is $2^n$ and so, 
we can enumerate the family in a uniform way accordingly with their position inside $\widehat{J}\,$: 
$\child(\widehat{J}\,)=\{ J_j\}_{j=0}^{2^n}$, with $J_0=Q$. We denote by $\alpha_j\in \{ 1,-1\}$ the corresponding coefficient 
$\alpha_{R_j}$.
With this
\begin{align*}
\Pi (f,g)
&=\sum_{J\in \mathcal D(Q)} \langle g,h_{J}\rangle
\langle T(\sum_{R\in \child(J_{\lambda})\cup Q}
\alpha_{R}\bar E_{R}f)
,h_{J}\rangle 
\\
&=\sum_{j=0}^{2^n}\alpha_{j}
\sum_{J\in \mathcal D(Q)} 
\langle f\rangle_{J_j}
\langle g,h_{J}\rangle 
\langle T\mathbbm{1}_{Q},h_{J}\rangle 
=\sum_{j=0}^{2^n}\alpha_{j}\Pi_j (f,g). 
\end{align*}

Then the estimate for $\Pi (f,g)$ follows once we uniformly estimate $\Pi_j (f,g)$.
The case when $j=0$, namely, $J_j=Q$, can be treated by summing a telescoping sum and using arguments similar to the ones used in Lemma \ref{densityinL2-1}. That is, by using that $Q$ can be chosen so that the average $\langle f\rangle_{Q}$ is as small as needed. Then we focus on the terms 
$\Pi_j (f,g)$ for $j\neq0$.
 
We remind that given $S\in \mathcal S^1$, we define  
 $\mathcal U^1(S)=\mathcal{D}_i(S)\setminus \bigcup_{S'\in \mathcal S^1(S)}\mathcal D_i(S')$, where  $\mathcal S^1(S)$ is the collection of cubes that belong to $\mathcal S^1$ and are included in $S$. In other words, $\mathcal U^1(S)$ is the collection of cubes $I\in \mathcal{D}_i$ such that $S$ is the minimal cube in $\mathcal S^1$ with $I\subset S$. 
 The collection  $(\mathcal U^1(S)\cap Q)_{S\in \mathcal S}$ is a partition of $\mathcal D_i(Q)$. 
 Then 
we have 
$$
\Pi_j (f,g)
=\sum_{S\in \mathcal S^1}\sum_{J\in \mathcal U^1(S)} 
\langle f\rangle_{J_j}
\langle g,h_{J}\rangle 
\langle T\mathbbm{1}_{S'},h_{J}\rangle
=\sum_{S\in \mathcal S^1}\Pi_{j,S} (f,g). 
$$ 

We remind that  for all $J\in \mathcal U^1(S)$ we have $J\subset 2^{-1}S$. We now fix $S$ and estimate $\Pi_{j,S} (f,g)$
by decomposing the operator in two parts:
\begin{align}\label{2terms}
\Pi_{j,S} (f,g)
&=\sum_{J\in \mathcal U^1(S)} 
\langle f\rangle_{J_j}
\langle g,h_{J}\rangle 
\langle T\mathbbm{1}_{S},h_{J}\rangle
+\sum_{J\in \mathcal U^1(S)} 
\langle f\rangle_{J_j}
\langle g,h_{J}\rangle 
\langle T\mathbbm{1}_{Q\setminus S},h_{J}\rangle
\\
&
\nonumber
=T_1+T_2, 
\end{align}
and we estimate each term separately. 

To bound $T_1$ we note that 
for all $J\in \mathcal U^1(S)$, by the testing condition \ref{selectiondef} we have 
$\langle g\rangle_{J}\leq C\langle g\rangle_{S}$ and $\langle f\rangle_{J_j}\leq C\langle f\rangle_{S}$ for each 
$J_j\in \child(\widehat{J}\,)$.
In addition,  
for $J\in \mathcal U(S)$, we have $\sup h_{J}\subset \widehat{J}\subset S$. 
With all this, 
we can rewrite and bound $T_1$ as follows:
\begin{align*}
|T_1|&=|\langle T\mathbbm{1}_{S}, 
\Big(\sum_{J\in \mathcal U(S)}\langle f\rangle_{J_j}
\langle g,h_{J}\rangle h_{J}\Big)\mathbbm{1}_{S}\rangle |
\\
&\lesssim \sup_{j=0,1, \ldots, 2^n}
\|\mathbbm{1}_{S}T\mathbbm{1}_{S}\|_{L^2(\mu)}
\Big\|\sum_{J\in \tilde{\mathcal D}(S)\cap \mathcal D}\langle f\rangle_{J_j}
\langle g,h_{J}\rangle h_{J}\Big\|_{L^2(\mu)}
\\
&\lesssim \sup_{j=0,1, \ldots, 2^n}
\mu(S)^{\frac{1}{2}}F(S)
\Big(\sum_{J\in {\mathcal U}(S)}\langle f\rangle_{J_j}^2
|\langle g,h_{J}\rangle|^2\Big)^{\frac{1}{2}}
\\
&\lesssim 
\mu(S)^{\frac{1}{2}}F(S)\langle f\rangle_{S} 
\Big(\sum_{J\in {\mathcal U}(S)}
|\langle g,h_{J}\rangle|^2\Big)^{\frac{1}{2}}
\\
&\lesssim 
\mu(S)^{\frac{1}{2}}F(S)\langle f\rangle_{S} 
\Big\| \sum_{J\in {\mathcal U}(S)}
\langle g,h_{J}\rangle h_{J}\Big\|_{L^2(\mu)}. 
\end{align*} 
We want to estimate the $L^2$-norm. 
Since $\mathcal U(S)$ is convex, we should use \eqref{Deltaincoord1} to collapse the sum. But the sum does not contain cubes that belong to $\mathcal S(S)\setminus \{S\}$ and so, for any cube in the sum there is no guarantee that its siblings are also included in the sum. For that reason, 
we add and subtract a term containing all cubes $S'\in \mathcal S(S)$. This way $\mathcal U(S)\cup \mathcal S(S)$ is convex, and contains all siblings of each cube included in the union. Moreover, the maximum cube in the collection is $S$, while the minimum cubes are either selected cubes $S'\in \mathcal S(S)$ or cubes $M\in \mathcal M(S)=\mathcal D_{N}(S)\cap \, \mathcal U(S)$, the collection of cubes with minimal side-length $2^{-N}$ that are not included in any selected cube $S'\in {\mathcal S(S)}$. Then 
we sum the telescoping sum as follows: 
\begin{align}\label{allS'}
\nonumber
\sum_{J\in {\mathcal U}(S)}
\langle g,h_{J}\rangle h_{J}
&=\sum_{J\in {\mathcal U}(S)\cap \mathcal S(S)}
\langle g,h_{J}\rangle h_{J}
-\sum_{S'\in {\mathcal S(S)}}
\langle g,h_{S'}\rangle h_{S'}
\\
&=\sum_{S'\in {\mathcal S(S)}}
\langle g\rangle_{S'} \mathbbm{1}_{S'}+\sum_{M\in {\mathcal M(S)}}
\langle g\rangle_{M} \mathbbm{1}_{M}-\langle g\rangle_{S} \mathbbm{1}_{S}
-\sum_{S'\in {\mathcal S(S)}}
\langle g,h_{S'}\rangle h_{S'}. 
\end{align}
Now we study the last term. We note that 
\begin{align}\label{oneS'}
\nonumber
\langle g, h_{S'}\rangle h_{S'}
&=\mu(S')^{\frac{1}{2}}(\langle g\rangle_{S'}-\langle g\rangle_{\widehat{S'}})
\mu(S')^{\frac{1}{2}}(\frac{\mathbbm{1}_{S'}}{\mu(S')}-\frac{\mathbbm{1}_{\widehat{S'}}}{\mu(\widehat{S'}\, )})
\\
&=\langle g\rangle_{S'}\mathbbm{1}_{S'}-\langle g\rangle_{\widehat{S'}}\mathbbm{1}_{S'}
-\langle g\mathbbm{1}_{S'}\rangle_{\widehat{S'}}\mathbbm{1}_{\widehat{S'}}
+\frac{\mu(S')}{\mu(\widehat{S'})}\langle g\rangle_{\widehat{S'}}\mathbbm{1}_{\widehat{S'}}, 
\end{align}
where we have used that 
\begin{equation}\label{wideningS'}
\frac{\mu(S')}{\mu(\widehat{S'})}\langle g\rangle_{S'}
=\frac{1}{\mu(\widehat{S'})}\int_{S'}g(x)d\mu(x)=\langle g\mathbbm{1}_{S'}\rangle_{\widehat{S'}}. 
\end{equation}
If we now substitute \eqref{oneS'} in the last term of \eqref{allS'}, we see that the corresponding first terms in these two numbered equations cancel out. This yields  
\begin{align*}
\sum_{J\in {\mathcal U}(S)}
\langle g,h_{J}\rangle h_{J}
&=\sum_{M\in {\mathcal M(S)}}
\langle g\rangle_{M} \mathbbm{1}_{M}-\langle g\rangle_{S} \mathbbm{1}_{S}
\\
&
+\sum_{S'\in {\mathcal S(S)}}\langle g\rangle_{\widehat{S'}}\mathbbm{1}_{S'}
+\sum_{S'\in {\mathcal S(S)}}\langle g\mathbbm{1}_{S'}\rangle_{\widehat{S'}}\mathbbm{1}_{\widehat{S'}}
-\sum_{S'\in {\mathcal S(S)}}\frac{\mu(S')}{\mu(\widehat{S'})}\langle g\rangle_{\widehat{S'}}\mathbbm{1}_{\widehat{S'}}=\sum_{i=1}^5h_i\, .  
\end{align*}

To estimate the norm of the function on the left hand side of previous equality, we bound the 
norm of each of the five resulting functions, which we denote by $h_i$ respectively. First, 
since $M\in {\mathcal M}(S)\subset \mathcal U(S)$, by the selection condition \eqref{selectiondef} we have  
$\langle g\rangle_{M}\lesssim \langle g\rangle_{S}$. Moreover, all cubes $M\in \mathcal M(S)$ have the same side-length, and are pairwise disjoint with union inside $S$. Then
\begin{align*}
\| h_1\|_{L^2(\mu)}^2&\lesssim \langle g\rangle_{S}^2
\sum_{M\in \mathcal M(S)}
\mu(M)
\leq \langle g\rangle_{S}^2\mu(S). 
\end{align*}

For the second term, we simply have 
$
\| h_2\|_{L^2(\mu)}^2=
\langle g\rangle_{S}^2\mu(S) 
$. 

To deal with the next term we note that, for each $S'\in \mathcal S(S)$, we have that $\widehat{S'}\in {\mathcal U}(S)$ and so, by the selection condition \eqref{selectiondef}, 
$\langle g\rangle_{\widehat{S'}}\lesssim \langle g\rangle_{S}$. Moreover, the collection $\mathcal S(S)$ is sparse with all its cubes inside $S$.
Then
\begin{align*}
\| h_3\|_{L^2(\mu)}^2&\lesssim \langle g\rangle_{S}^2
\sum_{S'\in \mathcal S(S)}\mu(S)
\lesssim \langle g\rangle_{S}^2\mu(S). 
\end{align*}

For the fourth term, the reasoning is a bit longer. We enumerate the collection $\mathcal S(S)=\{ S_i\}_{i=1}^m$. Then
$$
\| h_4\|_{L^2(\mu)}^2
=\| \sum_{S'\in \mathcal S(S)}\langle g\mathbbm{1}_{S'}\rangle_{\widehat{S'}} 
\mathbbm{1}_{\widehat{S'}}
\|_{L^2(\mu)}^2
=\sum_{i,j=1}^m\langle g\mathbbm{1}_{S_i}\rangle_{\widehat{S_i}}
\langle g\mathbbm{1}_{S_j}\rangle_{\widehat{S_j}}\mu(\widehat{S_i}\cap \widehat{S_j}).
$$
We note that $\widehat{S_i}\cap \widehat{S_j}\in \{
\widehat{S_i}, \widehat{S_j}, \emptyset \}$. Then, by symmetry, we have 
$$
\| h_4\|_{L^2(\mu)}^2 \leq 
2\sum_{i=1}^{m}\langle g\mathbbm{1}_{S_i}\rangle_{\widehat{S_i}}
\sum_{\substack{j=1\\ \widehat{S_j}\subseteq \widehat{S_i}}}^m
\langle g\mathbbm{1}_{S_j}\rangle_{\widehat{S_j}}\mu(\widehat{S_j})
\lesssim \sum_{i=1}^m\langle g\mathbbm{1}_{S_i}\rangle_{\widehat{S_i}}
\sum_{\substack{j=1\\ \widehat{S_j}\subseteq \widehat{S_i}}}^m
\int_{S_j} g(x)d\mu(x).
$$
Since the cubes $S_j\in \mathcal S(S)$ are pairwise disjoint 
by maximality, and $S_j\subsetneq \widehat{S_j}\subseteq \widehat{S_i}$, 
we get 
\begin{align*}
\| h_4\|_{L^2(\mu)}^2
&\leq 
\sum_{i=1}^m\langle g\mathbbm{1}_{S_i}\rangle_{\widehat{S_i}}
\int_{\widehat{S_i}} g(x)d\mu (x) 
=\sum_{i=1}^m\langle g\rangle_{\widehat{S_i}}
\int_{S_i} g(x)d\mu(x) 
\\
&
\lesssim  \langle g\rangle_{S}\sum_{i=1}^m\int_{S_i} g(x)d\mu(x) 
\leq \langle g\rangle_{S}\int_{S} g(x)d\mu(x) 
=\langle g\rangle_{S}^2\mu(S).
\end{align*}
In the second line we used that since $\widehat{S_i}\in \mathcal U(S)$, by the selection condition \eqref{selectiondef} we have  
$\langle g\rangle_{\widehat{S_i}}\lesssim \langle g\rangle_{S}$, and that the cubes $S_i$ are pairwise disjoint 
by maximality. 

We now show that we can pointwise estimate the last term $h_5$ by $h_4$. By the selection condition \eqref{selectiondef} applied first to the non-selected cube $\widehat{S'}\in \mathcal U(S)$ and then to the selected cube $S'\in \mathcal S(S)$, we have 
$\langle g\rangle_{\widehat{S'}}\leq C\langle g\rangle_{S}<\langle g\rangle_{S'}$.  Then,  
\begin{align*}
h_5=\sum_{S'\in \mathcal S(S)}\frac{\mu(S') }{\mu(\widehat{S'}\,)}
\langle g\rangle_{\widehat{S'}}
\mathbbm{1}_{\widehat{S'}}
&\leq \sum_{S'\in \mathcal S(S)}\frac{\mu(S') }{\mu(\widehat{S'}\,)}
\langle g\rangle_{S'}
\mathbbm{1}_{\widehat{S'}}
=\sum_{S'\in \mathcal S(S)}
\langle g\mathbbm{1}_{S'}\rangle_{\widehat{S'}}
\mathbbm{1}_{\widehat{S'}}=h_4\, ,
\end{align*}
where we used again the equality \eqref{wideningS'} in the second last equality. 

With all this, we have 
\begin{align}\label{2ndterm}
\Big\| \sum_{J\in {\mathcal U}(S)}
\langle g, h_{J}\rangle h_{J}\Big\|_{L^2(\mu)}
&\lesssim \langle g\rangle_{S}\mu(S)^{\frac{1}{2}}
\end{align}
and so, 
we finally get that the first term in \eqref{2terms} satisfies 
\begin{align*}
|T_1|&\lesssim 
\langle f\rangle_{S}\langle g\rangle_{S} 
\mu(S)F(S). 
\end{align*}

For the second term, we reason as follows. 
\begin{align*}
|T_2|&\leq \sum_{J\in \mathcal U^1(S)} 
\langle f\rangle_{J_j}
|\langle g,h_{J}\rangle |
|\langle T\mathbbm{1}_{Q\setminus S},h_{J}\rangle |
\\
&\leq \langle f\rangle_{S} 
\Big(\sum_{J\in \mathcal U^1(S)} 
|\langle g,h_{J}\rangle |^2\Big)^{\frac{1}{2}}
\Big(\sum_{J\in \mathcal U^1(S)} 
|\langle T\mathbbm{1}_{Q\setminus S},h_{J}\rangle |^2\Big)^{\frac{1}{2}}
\\
&\lesssim |\langle f\rangle_{S}||\langle g\rangle_{S}|\mu(S)^{\frac{1}{2}}
\Big(\sum_{J\in \mathcal U^1(S)} 
|\langle T\mathbbm{1}_{Q\setminus S},h_{J}\rangle |^2\Big)^{\frac{1}{2}}, 
\end{align*}
where the last inequality follows from \eqref{2ndterm}. 

We now prove that 
$$
\sum_{J\in \mathcal U^1(S)} 
|\langle T\mathbbm{1}_{Q\setminus S},h_{J}\rangle |^2\lesssim a_S^2\mu(S)
$$

Let $J\in \mathcal U^1(S)$ be fixed. 
Since $\widehat{J}\subset S$ and $h_{J}$ has mean zero, we can write
\begin{align*}
|\langle T(\mathbbm{1}_{S'\setminus S}), h_{J}\rangle |
&=|\langle T(\mathbbm{1}_{Q\setminus S})-T(\mathbbm{1}_{Q\setminus S})(c(\widehat{J})), h_{J}\rangle |
\\
&\leq \int_{Q\setminus S}\int_{\widehat{J}}
|K(t,x)-K(t,c(\widehat{J'}))||h_{J}(x)|d\mu(t)d\mu(x)
\\
&\leq \int_{Q\setminus S}
\int_{\widehat{J}}\frac{|x-c(\widehat{J})|^\delta }{|t-x|^{\alpha +\delta }}F(t,x)|h_{J}(x)|d\mu(t) d\mu(x),
\end{align*}
where $F(t,x)=L(|t-x|)S(|x-c(\widehat{J})|)D(1+\frac{|t+c(\widehat{J})|}{1+|t-c(\widehat{J'})|})$.

Since $J\subset 2^{-1}S$, we have  
for $t\in Q\setminus S$ that  
$\dist(t,J)\geq \dist(J,S)\geq \ell(S)/4$. 
Then we decompose 
$$
Q\setminus S=\bigcup_{i=0}^{\log \frac{\ell(Q)}{\ell(S)}}
S_i, 
$$
where 
$S_i=\{ t\in Q\setminus S : 2^{i-1}\ell(S)<
|t-c_{\widehat{J}}|\leq 2^{i}\ell(S)
\}\subset 2^{i+1}S$. 
As shown in \cite{V2022},  
we have for $(t,x)\in S_i$ 
$$
F(t,x)\lesssim L(\ell(S))S(\ell(S))D(\rdist (2^irS, \mathbb B)).
$$
With this, 
\begin{align*}
|\langle T&(\mathbbm{1}_{Q\setminus S}),h_{J}\rangle | 
\\
&\leq L(\ell(S))S(\ell(S))\int_{\widehat{J}}|h_{J}(x)|
d\mu(x)
\sum_{i\geq 1}
D(\rdist (2^iS, \mathbb B))
\int_{S_i}\frac{\ell(J)^{\delta }}{|t-c_{\widehat{J}}|^{\alpha +\delta }}
d\mu(t)
\\
&\lesssim 
\ell(J)^{\delta }\mu(\widehat{J}\,)^{\frac{1}{2}}
L(\ell(S))S(\ell(S))\sum_{i\geq 1}
D(\rdist (2^iS, \mathbb B))
\frac{\mu(S_i)}{(2^{i}\ell(S))^{\alpha +\delta }}.
\end{align*}
Moreover, the sum can be bounded by
\begin{align*}
\sum_{i\geq 1}
\frac{1}{(2^{i}\ell(S))^{\delta }}
\frac{\mu(2^{i+1}S)}{(2^{i+1}\ell(S))^{\alpha }}
D(\rdist (2^iS, \mathbb B))
&
\lesssim \frac{1}{\ell(S)^{\delta}}\tilde D(S)\rho(S).
\end{align*}
Then, 
we have
\begin{align*}
|\langle T(\mathbbm{1}_{Q\setminus S}), h_{J}\rangle |
&\lesssim 
\Big(\frac{\ell(J)}{\ell(S)}\Big)^{\delta }\mu(\widehat{J}\, )^{\frac{1}{2}}
L(\ell(S))S(\ell(S))
\tilde D(S)\rho(S)
\\
&\leq  \Big(\frac{\ell(J)}{\ell(S)}\Big)^{\delta }\mu(\widehat{J}\, )^{\frac{1}{2}}F(S)\rho(S)
=
a_{S}\Big(\frac{\ell(J)}{\ell(S)}\Big)^{\delta }\mu(\widehat{J}\, )^{\frac{1}{2}}.
\end{align*}
Now, we parametrize the cubes $J', J$ according with their relative size with respect to $S$: $\ell(J)=2^{-k}\ell(S)$.
We sum in $J$ 
and use that the cubes with fixed side length are disjoint, to get
\begin{align*}
\sum_{J\in \mathcal U^1(S)} 
&
|\langle T(\mathbbm{1}_{Q\setminus S}), h_{J}\rangle |^{2}
\lesssim a_S^2
\sum_{J\in \mathcal U^1(S)} 
\Big(\frac{\ell(J)}{\ell(S)}\Big)^{2 \delta }\mu(J)
\\
&\lesssim a_S^2
\sum_{k\geq 1}2^{-k2\delta}
\sum_{\substack{J\in {\mathcal D}(2^{-1}S)\\ \ell(J)=2^{-k}\ell(S)}}
\mu(J)
\\
&
\leq a_S^2
\sum_{k\geq 1}2^{-k2\delta}\mu(S)
\lesssim \mu(S)
a_S^2.
\end{align*}

With this we get 
$
|T_2|\lesssim 
a_S \langle f\rangle_{S} \langle g\rangle_{S} 
$,  
and so, finally
\begin{align*}
|\Pi (f,g)|
&\lesssim \sum_{S\in \mathcal S}
a_S \langle f\rangle_{S} 
\langle g\rangle_{S}. 
\end{align*}

\end{proof}

\subsection{Essentially attached cubes and fragments of the two paraproducts}

We work now with the expression $A-B_1-B_2$, where $A$ contains all attached cubes $I,J\in \mathcal D(Q)$ such that $\inrdist(I,J)\leq \ec(I,J)^{-\theta}$, that is, $1\leq k\leq  2^{\theta |e|}$, $B_1$ contains cubes $I,J\in \mathcal D(Q)$, $\widehat{J}\subset \widehat{I}$ and , while $B_2$ is similar with the only change that $\widehat{I}\subset \widehat{J}$. 
For this, we remind the notation $I\parallel J$, if  $0\leq \rdist(I,J)<1$,  $\lambda(\widehat{I},\widehat{J}\, )\leq 1$, and 
we now use a new symbol: $I\Bumpeq J$, if  $\rdist(I,J)=0$,  $\lambda(\widehat{I},\widehat{J}\, )\leq 1$. Then the terms to be estimated are 
\begin{align*}
A
&=\sum_{I\parallel J}
\langle f,h_{I}\rangle 
\langle g, h_{J}\rangle
\langle Th_I,h_J\rangle , 
\end{align*}
\begin{align*}
G_1=
&\sum_{\substack{I\Bumpeq J\\ 
\widehat{J}\subset \widehat{I}}}
\langle f, h_{I}\rangle \langle g, h_{J}\rangle
\langle Th_{I}^{\widehat{J}, Q}, h_{J}\rangle ,
\hskip30pt 
G_2=
&\sum_{\substack{I\Bumpeq J\\ 
\widehat{I}\subset \widehat{J}}}
\langle f, h_{I}\rangle \langle g, h_{J}\rangle
\langle Th_{I}, h_{J}^{\widehat{I}, Q}\rangle .
\end{align*} 

\begin{proposition}\label{resultattached} Let $A$, $G_1$ and $G_2$ be as defined before by using the dyadic grid $\mathcal D_i$. Let $B_i$ be the sub-bilinear form defined in \eqref{bilinearsquare}. Then
$$
|A-G_1-G_2|\lesssim \Lambda_{\mathcal S}(f,g)+B_i(f, g).
$$

\end{proposition}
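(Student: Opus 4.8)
The plan is to handle the three pieces $A$, $G_1$, $G_2$ together, exploiting that in all of them the cubes are geometrically close ($\rdist(\widehat I,\widehat J)<1$ or $=0$) but have $\lambda(\widehat I,\widehat J)\le 1$, i.e.\ $k\le 2^{\theta|e|}$ and hence, by Lemma~\ref{zeroborder}, the relevant cubes $J$ cluster near the inner boundary $\mathfrak B_{\widehat I}$; the collective $\mu$-mass of that clustering region is arbitrarily small once we work at scale $>N_0$. First I would split $A=A'+A''$, where $A'$ collects the terms with $\dist(\widehat I,\widehat J)>0$ (genuinely attached but disjoint closures up to a small gap) and $A''$ those with $\dist(\widehat I,\widehat J)=0$ (so $I^{\mathrm{fr}}$ meets $J$). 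For $A'$ one uses bump estimate \eqref{bump2}: since $1+\inrdist(I,J)\approx k$ and $1\le k\le 2^{\theta|e|}$, the factor $(1+\inrdist)^{-(\alpha+\delta)}$ is harmless, but now the eccentricity gives no decay; the gain must come instead from the smallness of $\mu(C_N)$, i.e.\ from the fact that $P_Q^N g$ has been taken supported off (a neighborhood of) $\partial\mathcal D(Q)$ except on the small set $C_N$. Concretely, for each fixed $I$ the admissible $J$'s lie in $C_k(\widehat I)\subset 3\widehat I$, and summing $|\langle f,h_I\rangle||\langle g,h_J\rangle|\mu(I)^{1/2}\mu(J)^{1/2}\ell(I\smland J)^{-\alpha}F(I,J)$ over such $J$, then Cauchy--Schwarz in $J$ and using $\sum_J\mu(J)\le\mu(C_N)$, bounds $A'$ by $\mu(C_N)^{1/2}\|f\|_{L^\infty}\|g\|_{L^\infty}$ times a polynomial-in-$2^{N_0}$ factor, which by \eqref{Small} is $\lesssim\langle|f|\rangle_Q\langle|g|\rangle_Q\mu(Q)\le\Lambda_{\mathcal S}(f,g)$.

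For $A''$, $G_1$ and $G_2$ the closures actually touch, so the bump estimates \eqref{bump2}--\eqref{bump3} are not available with decay and the subtracted alternative wavelets $h_I^{\widehat J,Q}$ are designed precisely to cancel the singular contribution. The strategy here is the one the section title advertises: estimate these ``essentially attached'' terms by a \emph{trivial} sparse form, i.e.\ directly, without square functions. I would group the sum over $S\in\mathcal S$ via the partition $\{\mathcal U(S)\}$, so that for each $S$ we collect all pairs $I,J$ with $I\smlor J\in\mathcal U(S)$; since $I,J$ are attached and comparable-or-nested, both lie in (a fixed dilate of) $S$, and the stopping condition \eqref{selectiondef} gives $\langle|f|\rangle_{I}\lesssim\langle|f|\rangle_S$, $\langle|g|\rangle_J\lesssim\langle|g|\rangle_S$ (more precisely the relevant averages over children are controlled by the $S$-average). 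Then $A''-G_1-G_2$ localized to $S$ is, up to the cancellation built into $h_I-h_I^{\widehat J,Q}$, an inner product of the form $\langle T\mathbbm 1_S,\Phi_S\rangle$ with $\Phi_S$ a sum of Haar functions supported in $S$ having $L^2(\mu)$-norm $\lesssim\langle f\rangle_S\langle g\rangle_S\mu(S)^{1/2}$ --- exactly as in the $T_1$-estimate of Proposition~\ref{estimate4paraproducts} (the computation of $\|h_1\|+\dots+\|h_5\|$). Invoking the testing hypothesis \eqref{restrictcompact1-2bddness}, $\|\mathbbm 1_S T\mathbbm 1_S\|_{L^2(\mu)}\lesssim\mu(S)^{1/2}F_T(S)$, yields the bound $\sum_{S\in\mathcal S}a_S\langle f\rangle_S\langle g\rangle_S\mu(S)=\Lambda_{\mathcal S}(f,g)$ for that portion; the off-diagonal part $\langle T\mathbbm 1_{Q\setminus S},\Phi_S\rangle$ is handled by the kernel smoothness estimate \eqref{smoothcompactCZ} and the density $\rho$, exactly as the $T_2$-term in Proposition~\ref{estimate4paraproducts}, giving again $\lesssim\sum_S a_S\langle f\rangle_S\langle g\rangle_S\mu(S)$.

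Finally, the pieces of $A''$, $G_1$, $G_2$ where the alternative wavelet does \emph{not} fully cancel --- namely the second term on the right of \eqref{bump3}, carrying the indicator $\mathbbm 1_{\widehat I\setminus I}(c(\widehat J))$ --- are handled exactly as the close-cubes term $C$ was in Proposition~\ref{dualbysquare}, i.e.\ absorbed into $\langle S_{2,i}^{s_e}f,S_{2,i}^{-s_e}g\rangle$ with the geometric factor $2^{-|e|\theta\delta/2}$, which is why the statement allows an extra $B_i(f,g)$ on the right. Summing over $e$ and over the (at most) $3^n$ relative positions gives the claim. The main obstacle I anticipate is the bookkeeping in $A''$ and the $G_j$: one must verify that after subtracting $h_I^{\widehat J,Q}$ the residual operator genuinely reduces to $\mathbbm 1_S T\mathbbm 1_S$ acting on a Haar-expansion of $g$ whose coefficients are $\langle f\rangle_{J_j}\langle g,h_J\rangle$ --- so that the telescoping/collapsing argument of \eqref{allS'}--\eqref{oneS'} applies verbatim --- and simultaneously that the ``attached but disjoint'' part $A'$ truly falls under the smallness estimate \eqref{Small} rather than needing a square function; keeping the constant independent of $N$ (so the limit $N\to\infty$ survives) is the delicate point, and it is exactly what the choice $\epsilon=\bigl(\langle|f|\rangle_Q\langle|g|\rangle_Q\mu(Q)/(1+\|f\|_\infty\|g\|_\infty 2^{N_0(n+3)})\bigr)^2$ in Proposition~\ref{dualbysquare} was set up to guarantee.
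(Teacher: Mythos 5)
Your proposal correctly identifies the smallness mechanism (Lemma~\ref{zeroborder} together with the choice of $\epsilon$ in~\eqref{Small}) as the engine behind controlling attached cubes, and you correctly anticipate that the extra $B_i(f,g)$ on the right is needed for certain cubes that are added in and then satisfy $k>2^{\theta|e|}$. However, the route you propose for $A''$, $G_1$, $G_2$ via the testing hypothesis~\eqref{restrictcompact1-2bddness} is not what the paper does, and it has a genuine gap. The paper treats \emph{all three} pieces $H_1,H_2,H_3$ the same way: it adds the siblings and the cubes in $I_{\rm over}$ (absorbed into $B_i(f,g)$ since for those $k>2^{\theta|e|}$, so~\eqref{bump2}/\eqref{bump3} apply with decay), then telescopes the resulting convex chains in \emph{both} variables $I$ and $J$ down to finite sums of the form $\langle T\mathbbm{1}_{I^{i,r}},\mathbbm{1}_J\rangle$, with the minimal-scale cubes $J$ clustering near the inner boundary, and closes via the (bad, $2^{N_0}$-sized) $L^2$-bound for the \emph{truncated} operator $T_{\gamma,Q}$ compensated by the arbitrarily small $\mu(\tilde S)$. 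The testing condition plays no role here; the gain is that the whole telescoped contribution is made smaller than the single sparse term $\langle f\rangle_Q\langle g\rangle_Q\mu(Q)$.

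The concrete gap in your $A''$, $G_1$, $G_2$ argument: after the cancellation, $h_I-h_I^{\widehat J,Q}$ is supported on $Q\setminus I$ (since $h_I^{\widehat J,Q}$ carries an $\mathbbm{1}_Q$ factor), not on $S$, and the double sum over $I$ and $J$ --- with both cubes scattered near $\mathfrak B_{\widehat I}$ at many scales --- does not collapse to $\langle T\mathbbm{1}_S,\Phi_S\rangle$ with $\Phi_S$ a Haar expansion of controlled coefficients. After telescoping it collapses instead to inner products of indicators of \emph{different} small cubes $I^{i,r}$ and $J$, which the testing hypothesis $\|\mathbbm{1}_S T\mathbbm{1}_S\|_{L^2}\lesssim\mu(S)^{1/2}F_T(S)$ does not directly control. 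In addition, your statement that $B_i(f,g)$ is there to absorb the second term of~\eqref{bump3} is a misattribution: that term appears in the treatment of nested cubes with $\lambda>1$ inside Proposition~\ref{dualbysquare}, not in the attached-cube regime $\lambda\le1$ of the present proposition; here $B_i(f,g)$ is reserved for the auxiliary pairs introduced to complete the telescope.
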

\begin{proof}
We are actually going to prove that a combination of terms in $A, G_1, G_2, B_i(f, g)$ can be done arbitrarily small, in particular smaller than the simplest possible sparse form:
$$
|A-G_1-G_2-B_i(f, g)|\lesssim \langle f\rangle_Q \langle g\rangle_Q\mu(Q)\lesssim \Lambda_{\mathcal S}(f,g), 
$$
since $Q\in \mathcal S$ by definition.

We separate from $A$ terms that can be combined with each $G_i$ to write
\begin{align*}
A-G_1-G_2
&=\sum_{\substack{I\parallel J\\ \rdist(I,J)>0}}
\langle f,h_{I}\rangle 
\langle g, h_{J}\rangle
\langle Th_I,h_J\rangle
\\
&
+\sum_{\substack{I\Bumpeq J\\ 
\widehat{J}\subset \widehat{I}}}
\langle f, h_{I}\rangle \langle g, h_{J}\rangle
\langle T(h_I-h_{I}^{\widehat{J}, Q}), h_{J}\rangle
+\sum_{\substack{I\Bumpeq J\\ 
\widehat{I}\subset \widehat{J}}}
\langle f, h_{I}\rangle \langle g, h_{J}\rangle
\langle Th_{I}, h_J-h_{J}^{\widehat{I}, Q}\rangle 
\\
&=H_1+H_2+H_3. 
\end{align*}
We remind the following notation:
for $N\in \mathbb N$, which we chose in \eqref{Small}, we 
write 
$\mathcal{D}(Q)_{\geq N}=\{ I\in \mathcal{D}_M^c(Q)/ \ell(I)\geq 2^{-N}\ell(Q)\}$  
and $\mathcal{D}(Q)_{N}=\{ I\in \mathcal{D}_M^c(Q)/ \ell(I)=2^{-N}\ell(Q)\}$.
Moreover, for $I\in \mathcal{D}(Q)_{\geq N}$, let 
$I_{\theta}$ be the family of cubes $J\in \mathcal{D}(Q)_{\geq N}$
such that 
$1\leq k\leq  2^{\theta |e|}$, and a symmetric definition for $J_{\theta}$. 

We rewrite these three terms changing their corresponding summation symbols by  
\begin{align*}
H_1\equiv \sum_{I\in \mathcal{D}(Q)_{\geq N}}\sum_{\substack{J\parallel I\\ J\in I_{\theta}}}, 
\hskip30pt 
H_2\equiv \sum_{I\in \mathcal{D}(Q)_{\geq N}}\sum_{\substack{J\Bumpeq I\\ 
\widehat{J}\subset \widehat{I}\\ J\in I_{\theta}}}, 
\hskip30pt 
H_3\equiv \sum_{J\in \mathcal{D}(Q)_{\geq N}}\sum_{\substack{I\Bumpeq J\\ 
\widehat{I}\subset \widehat{J}\\ I\in J_{\theta}}} , 
\end{align*}
respectively. We plan to add some more terms to each sum. For this, 
we need to describe the collection $I_{\theta }$. But first we remind the following definition: for $I\in \mathcal{D}(Q)_{N}$, 
$I^{\rm fr}$ denotes the family of dyadic cubes $I'\in \mathcal{D}(Q)_{N}$ such that $\ell(I')=\ell(I)$ and $\dist(I',I)=0$. The cardinality of $I^{\rm fr}$ is $3^n$.  

For each fixed cube $I\in \mathcal{D}(Q)_{\geq N}$ and each side-length $\ell(J)$ so that $\ell(J)\leq \ell(I)$, 
all cubes $J\in I_{\theta}$ satisfy $J\subset I'$ with $I'\in I^{\rm fr}$. This property is due to the following inequality:
since $1\leq k\leq 2^{\theta |e|}+1$, 
\begin{align}\label{near}
\nonumber
\dist(J,\mathfrak{B}_I)&=(k-1)\ell(I\smland J)
<2^{|e|\theta}\ell(I\smland J)
= \Big(\frac{\ell(I\smlor J)}{\ell(I\smland J)}\Big)^{\theta}\ell(I\smland J)
\\
&
=\Big(\frac{\ell(I\smland J)}{\ell(I\smlor J)}\Big)^{1-\theta}
\ell(I\smlor J)=2^{-|e|(1-\theta)}\ell(I\smlor J)\leq \ell(I\smlor J).
\end{align}
This inequality is used to show the two most relevant properties of $I_{\theta}$. 
\begin{itemize}
\item[a)] Since $\dist(J,\mathfrak{B}_I)<\ell(I)$, 
the cubes $J\in I_{\theta}$ are either contained in $I$ or in one of its $3^n$ friends $I'\in I^{\rm fr}$ as we see: 
if $J\in I_{\theta} $
and $J\subset I'$ with $\ell(I')=\ell(I)$, then
$$
\dist(I,I')\leq \dist(I,J)+\dist(J,I')< \ell(I)+0,
$$
and so $\dist(I,I')=0$. This implies $I'\in I^{\rm fr}$.

\item[b)] For each fixed cube $I\in \mathcal{D}(Q)_{\geq N}$ and each side-length $\ell(J)$ so that $\ell(J)>\ell(I)$, we see that 
there are only up to $3^n$ cubes $J\in I_{\theta}$. The reason is similar as before: since from \eqref{near} we have in this case $\dist(J,\mathfrak{B}_I)< \ell(J)$ for each $J\in I_{\theta}$, we have that 
 $I$ is either contained in $J$ or in one of its friends $J'\in J^{\rm fr}$. That is, for each $J\in I_{\theta}$ of fixed size, either it contains $I$ or one of its friends does: if $J\in I_{\theta} $
and $I\subset J'$ with $\ell(J')=\ell(J)$, then
$$
\dist(J,J')\leq \dist(J,I)+\dist(I,J')< \ell(J)+0,
$$
and so $\dist(J,J')=0$. This implies $J'\in J^{\rm fr}$.
Then the cardinality of $J^{\rm fr}$ limits the number of possible cubes $J\in I_{\theta}$ of fixed side-length to up to $3^n$. 
\end{itemize}

Once $I_{\theta}$ is fully described, we consider the collection of cubes above $I_{\theta}$.
For each $I\in \mathcal{D}(Q)_{\geq N}$, let 
$I_{\max}$ be the family of cubes $J\in I_{\theta}$ that are maximal with respect to the inclusion. Then let 
$I_{\rm over}$ be the family of cubes 
$R\in \mathcal{D}(Q)_{\geq N}$ such that 
$J\subsetneq R$ for some $J\in I_{\max}$. 
We note that 
for all $I\in \mathcal{D}(Q)_{\geq N}$, 
either $Q\in I_{\max}$ (if $I\in Q_{\theta}$) or 
$Q\in I_{\rm over}$. So, we always have 
$Q\in I_{\theta}\cup I_{\rm over}$. 
We also note that 
all cubes in $I_{\rm over}$ satisfy that $k>2^{\theta |e|}$ with respect to $I$.

Our next step is to include the cubes in $I_{\rm over}$ and for each pair $(I,J)$ in the sum defining $H_j$, we add the siblings either $I$ or $J$ that are not already contained in $H_j$. We show the details for $H_1$ and make some comments about the other two terms. 
We define
\begin{align*}
{\rm Ad_{H_1}}&=\sum_{I\in \mathcal{D}(Q)_{\geq N}}\sum_{J\in I_{\rm over}}
\langle f,h_I\rangle \langle g,h_J\rangle
\langle Th_I,h_J \rangle 
\\
&
+\sum_{I\in \mathcal{D}(Q)_{\geq N}}\sum_{J\in I_{\theta}}\sum_{\substack{J'\in \child(\hat{J})\\ J'\notin I_{\theta}}}
\langle f,h_I\rangle \langle g,h_{J'}\rangle
\langle Th_I,h_{J'} \rangle 
\\
&
+\sum_{J\in \mathcal{D}(Q)_{\geq N}}\sum_{I\in J_{\theta}}\sum_{\substack{I'\in \child(\hat{I})\\ I'\notin J_{\theta}}}
\langle f,h_{I'}\rangle \langle g,h_J\rangle
\langle Th_{I'},h_J \rangle , 
\end{align*}
and similar for $H_2$ and  $H_3$. 

Then to previous expression we  
add and subtract ${\rm Ad_{H_j}}$ to obtain for example 
\begin{align}\label{b4teles}
|H_1|\lesssim |\sum_{I\in \mathcal{D}(Q)_{\geq N}}
\langle f,h_I\rangle 
\langle Th_I,\sum_{J\in I_{\theta}\cup I_{\rm over}}\langle g,h_J\rangle h_J \rangle |
+|{\rm Ad_{H_1}}|.
\end{align}
In the last expression, the collections  $I_{\theta}$ and $I_{\rm over}$ are not exactly the same as defined before. But we use the same notation for them because they consist of the same cubes as before plus their corresponding siblings if they were not initially in the expression defining $H_1$. 

Since all pair of cubes $(I,J)$ added satisfy that $k>2^{\theta |e|}$, we can apply the reasoning of any of the previous cases
to prove that the second term  in \eqref{b4teles} can be estimated by a bilinear form acting on square functions, namely
$|{\rm Ad_{H_j}}|\lesssim B_i(f, g)$. 
We note that in the case of ${\rm Ad_{H_2}}$ this is due to the fact that the expression 
$\langle T(h_I-h_{I}^{\widehat{J}, Q}), h_{J}\rangle$ for the described cubes satisfies the inequality 
\eqref{bump3} as given in Proposition \ref{twobumplemma1}. Similar for $j=3$. 
Then we only need to study the first term in \eqref{b4teles}. 

For each  $I\in \mathcal{D}(Q)_{\geq N}$, we have that $I_{\theta}\cup I_{\rm over}$ is a convex family of cubes that contains all siblings of each cube in the sum, has minimal cubes in $\mathcal D(Q)_N$ and maximal cube $Q$. Then,  
by summing a telescoping series, 
we get  
$$
\sum_{J\in I_{\theta}\cup I_{\rm over}}\langle g,h_J\rangle h_J 
=\sum_{J\in \mathcal{D}(Q)_{N}\cap I_{\theta}}
\langle g\rangle_J\mathbbm{1}_J-\langle g\rangle_{Q}\mathbbm{1}_{Q}. 
$$
The term 
$\langle g\rangle_{Q}\mathbbm{1}_{Q}$
can be dealt again 
by using that $Q$ can be chosen so that the average $\langle f\rangle_{Q}$ is as small as needed. 
and repeating the arguments of Lemma \ref{densityinL2-1}. 
Then we focus on the first term. 

By Fubini's theorem, the first term in the right hand side of \eqref{b4teles} can be rewritten as 
\begin{align}\label{telescopy4f}
\sum_{I\in \mathcal{D}(Q)_{\geq N}}&\sum_{J\in \mathcal{D}(Q)_{N}\cap I_{\theta}}
\langle f,h_I\rangle 
\langle g\rangle_J
\langle Th_I,\mathbbm{1}_J \rangle 
=\hspace{-.2cm}\sum_{J\in \mathcal{D}(Q)_{N}}
\langle g\rangle_J
\langle T(\hspace{-.2cm}\sum_{I\in \mathcal{D}(Q)_{\geq N}\cap J_{\theta}}\hspace{-.2cm}
\langle f,h_I\rangle h_I),\mathbbm{1}_J \rangle ,
\end{align}
where $J_\theta $ is defined as $I_\theta$ was defined before. 

For each minimal cube $J\in \mathcal{D}(Q)_{N}$, the collection 
$J_{\theta}$ consists of cubes $I$ such that $\ell(I)\geq \ell(J)$. 
Then, from previous description of $I_{\theta}$ applied now to $J_{\theta}$, we have that for each fixed side length there are up to $3^n$ cubes $I\in J_{\theta}$. But remember that for every cube in the sum we added its siblings if they were not already in the sum. Then $J_{\theta}$ can be organized in up to $3^n$ collections of cubes $\textrm{Chain}_j$ such that 
the collection consists of two parts: 
\begin{itemize}
\item an increasing convex chain of cubes
$I_{j,R_j}\subset I_{j,R_j-1}\subset \ldots \subset I_{j, r_{j}}$, where each cube is parametrized by its side-length   
$\ell(I_{j,r})=2^{-r}\ell(Q)$ with $r\in 
\{ r_{j},\ldots, R_j\}\subset 
\{0,\ldots, N\}$, 
\item and the siblings of each cube in previous chain.
\end{itemize}
Some of these collections may be empty, while for some others, its minimal cube is a cube in $J^{\rm fr}$. 
The cubes $I_{j,R_j}$ and $I_{j,r_j}$ depend on $J$, but we do not reflect that dependency on the notation. 


Each collection is convex, and such that for each cube in the collection all its siblings are also in the collection. Moreover the minimal cube is  $I_{j,R_j}$ while the maximal cube is $\widehat{I}_{j,r_j}$. 
Then, for each fixed $J\in \mathcal{D}(Q)_{N}$, we can sum a telescoping sum to get 
\begin{align*}
\sum_{I\in \mathcal{D}(Q)_{\geq N}\cap J_{\theta}}
\langle f,h_I\rangle h_I
&=\sum_{j=1}^{3^n}\alpha_j \sum_{I\in \textrm{Chain}_j}\langle f,h_I\rangle h_I
\\
&
=\sum_{j=1}^{3^n}
\langle f\rangle_{{I}_{j,R_j}} \mathbbm{1}_{{I}_{j,R_j}}
-\sum_{j=1}^{3^n}\langle f\rangle_{\widehat{I}_{j,r_j}} \mathbbm{1}_{\widehat{I}_{j,r_j}}.
\end{align*}
With this \eqref{telescopy4f} can be written as 
\begin{align}\label{T1T2}
\sum_{J\in \mathcal{D}(Q)_{N}}&
\sum_{j=1}^{3^n}
\langle f\rangle_{I_{j,R_j}} 
\langle g\rangle_J
\langle T\mathbbm{1}_{I_{j,R_j}},\mathbbm{1}_J \rangle
-\hspace{-.4cm}\sum_{J\in \mathcal{D}(Q)_{N}}
\sum_{j=1}^{3^n}\langle f\rangle_{\widehat{I}_{j,r_j}} 
\langle g\rangle_J
\langle T\mathbbm{1}_{\widehat{I}_{j,r_j}}\hspace{-.1cm},\mathbbm{1}_J \rangle
=T_1-T_2.
\end{align}
Since exactly the same ideas work to estimate each term, we only show the calculations for the second one, which we write without the tilde just to simplify notation. 

We mention now that the analog expression for the terms corresponding to $H_2$ and $H_3$ can be written separated in some 
extra terms since it does not make any difference to keep them together. For example, for $H_2$ we have 
\begin{align*}
\sum_{J\in \mathcal{D}(Q)_{N}}&
\sum_{j=1}^{3^n}
\langle f\rangle_{I_{j,R_j}} 
\langle g\rangle_J
\langle T\mathbbm{1}_{I_{j,R_j}},\mathbbm{1}_J \rangle
-\sum_{J\in \mathcal{D}(Q)_{N}}
\sum_{j=1}^{3^n}\langle f\rangle_{\widehat{I}_{j,r_j}} 
\langle g\rangle_J
\langle T\mathbbm{1}_{\widehat{I}_{j,r_j}},\mathbbm{1}_J \rangle
\\
&-\sum_{J\in \mathcal{D}(Q)_{N}}
\sum_{j=1}^{3^n}
\langle f\rangle_{I_{j,R_j}} \mathbbm{1}_{I_{j,R_j}}(c(J))
\langle g\rangle_J
\langle T\mathbbm{1}_{Q},\mathbbm{1}_J \rangle
\\
&
+\sum_{J\in \mathcal{D}(Q)_{N}}
\sum_{j=1}^{3^n}\langle f\rangle_{\widehat{I}_{j,r_j}} \mathbbm{1}_{\widehat{I_{j,R_j}}}(c(J))
\langle g\rangle_J
\langle T\mathbbm{1}_{Q},\mathbbm{1}_J \rangle .
\end{align*}
These terms have very similar expressions and can all be treated in the same way we plan to deal with \eqref{T1T2}. 

To estimate then $T_2$, we first note that, by using that $t, x\in Q$, we can write the kernel operator as 
\begin{align*}
K_{\gamma,Q}(t,x)&=K(t,x)
(1-\phi(\frac{|t-x|}{\gamma})).
\end{align*}

We know that 
$K_{\gamma, Q}(t,x)=0$ for $|t-x|<\gamma $. Then, 
if $\ell(J)\leq \ell(I)<\gamma/3$ and 
$\dist(I,\mathfrak{B}_{J})< \gamma/3 $, 
we have 
$|t-x|<\gamma $ for all $t\in I$ and $x\in J$ and so,
$\langle T\mathbbm{1}_I,\mathbbm{1}_J \rangle =0$. 

From \eqref{near}, we have that 
the cubes $J$ and $I_{j,r_j}$ in $T_2$ satisfy 
$
\dist(J, \mathfrak{B}_{I_{j,r_j}})< 
\ell(I_{j,r_j})
$. 
Then, when 
$\ell(I_{j,r_j})<\gamma /6$, we have 
$\dist(J, \mathfrak{B}_{I_{j,r_j}})\leq \gamma /3$ and so 
$\langle T\mathbbm{1}_{I_{j,r_j}},\mathbbm{1}_{J} \rangle =0$. 

In other words, 
the scales for which the dual pair is non-zero 
satisfy 
$\ell(I_{j,r_j})=2^{-r}\ell(Q)\geq \gamma /6$, that is,
$r\leq \log\frac{6\ell(Q)}{\gamma }\leq N_0$. And since 
$r\in \{0,\ldots, N\}$, that means that the non-zero terms in $T_2$ 
consist of cubes $I_{j,r_j}$ with  
at most $N_0+1$ different side-lengths (in fact in the $N_0+1$ largest scales, all of them in $\{0, 1, \ldots , N_0\}$).



Now we use Fubini's theorem to switch the order of summation in $T_2$.
For this, 
we re-parametrize the cubes we have up to now denoted by $I_{j,r_j}$ in the following way: 
for each scale $r\in \{ 0,\ldots , N_0\}$ and 
each $i\in \{ 1,\ldots , 2^{rn}\}$, 
we denote by $I^{i,r}$ the cubes 
such that $\ell(I^{i,r})=2^{-r}\ell(Q)$. 
We note that inside $Q$, for each 
$r\in \{ 0,\ldots , N_0\}$ there are in total $2^{rn}$ of such cubes.
Now, for each $I^{i,r}$ 
we define $\mathcal J^{i,r}$ as the family of cubes 
$J\in \mathcal D(Q)_N$ such that 
$J\in (I^{i,r})_{\theta}$. 
Finally, we denote 
$C^{i,r}=\bigcup_{J\in \mathcal J^{i,r}}3J$.
With this
\begin{align*}
T_{2}=\sum_{r=0}^{N_0-1}\sum_{i=0}^{2^{rn}} \sum_{J\in \mathcal J^{i,r}}
\langle f\rangle_{I^{i,r}} 
\langle g\rangle_J 
\langle T\mathbbm{1}_{I^{i,r}},\mathbbm{1}_J \rangle .
\end{align*}

Now, for fixed $I^{i,r}$, let $\mathcal{J}_{0,0}$, $\mathcal{J}_{1,0}$, $\mathcal{J}_{1,1}$ and $\mathcal{J}_{0,1}$ be  
the collection of cubes in $J\in \mathcal J^{i,r}$
such that the complex number $\langle T\mathbbm{1}_{I^{i,r}},\mathbbm{1}_J \rangle$ belongs to the first, second, third, and fourth quadrants respectively. We note that in $\mathcal{J}_{l_1,l_2}$ we have 
$|\Re\langle T\mathbbm{1}_{I^{i,r}},\mathbbm{1}_J \rangle|=(-1)^{l_1}
\Re\langle T\mathbbm{1}_{I^{i,r}},\mathbbm{1}_J \rangle $ and 
$|\Im\langle T\mathbbm{1}_{I^{i,r}},\mathbbm{1}_J \rangle|=(-1)^{l_1}
\Im\langle T\mathbbm{1}_{I^{i,r}},\mathbbm{1}_J \rangle $. 
Let also $S_{l_1,l_2}$ the union of the cubes in $\mathcal{J}_{l_1,l_2}$. Finally, we define 
$$
\tilde S
=\bigcup_{r=0}^{N_0-1}\bigcup_{i=0}^{2^{rn}}\bigcup_{J\in \mathcal{J}^{i,r}}J,
$$ 
that is, the union of 
all cubes $J\in \mathcal{D}(Q)_{N}$ such that 
$J\in (I^{i,r})_\theta$ for some $i,r$. 
We note that $S_{l_1,l_2}\subset \tilde S$ and 
$$
\tilde S
=\bigcup_{k=0}^{N_0-1}\bigcup_{i=0}^{2^{rn}}\bigcup_{l_1,l_2\in \{0,1\}}\bigcup_{J\in \mathcal{J}_{l_1,l_2}}J.
$$

Before continuing, we remind that in the decomposition obtained in \eqref{fdecom} 
we are first working on estimates for 
$\langle TP_{Q}^Nf,g_1\rangle $. In this case, the cubes $J\in \tilde{\mathcal D}$ and so, they are open cubes. Therefore, 
$C^{i,r}$ and $\tilde S$ are open sets and they satisfy 
by the choice of $N$ in \eqref{Small} that 
$\mu(\tilde S)$ is sufficiently small. 
With all this, we have 
\begin{align*}
|T_{2}|&\leq \sum_{r=0}^{N_0-1}\sum_{i=0}^{2^{rn}} \sum_{J\in \mathcal J^{i,r}}
\langle f\rangle_{I^{i,r}} 
\langle g\rangle_J 
|\langle T\mathbbm{1}_{I^{i,r}},\mathbbm{1}_J \rangle|
\\
&\lesssim \| f\|_{L^{\infty }(\mu)}\| g\|_{L^{\infty }(\mu)}
\sum_{r=0}^{N_0-1}\sum_{i=0}^{2^{rn}}
\sum_{l_1,l_2\in \{0,1\}}\Big( \sum_{J\in \mathcal{J}_{l_1,l_2}}(-1)^{l_1}
\Re\langle T\mathbbm{1}_{I^{i,r}},\mathbbm{1}_J \rangle
\\
&
\hskip200pt +\sum_{J\in \mathcal{J}_{l_1,l_2}}(-1)^{l_1}\Im\langle T\mathbbm{1}_{I^{i,r}},\mathbbm{1}_J \rangle\Big)
\\
&=\| f\|_{L^{\infty }(\mu)}\| g\|_{L^{\infty }(\mu)}
\sum_{r=0}^{N_0-1}\sum_{i=0}^{2^{rn}}
\sum_{l_1,l_2\in \{0,1\}}
\big((-1)^{l_1}\Re\langle T\mathbbm{1}_{I^{i,r}},\mathbbm{1}_{S_{l_1,l_2}} \rangle
\\
&
\hskip195pt +(-1)^{l_1}\Im\langle T\mathbbm{1}_{I^{i,r}},\mathbbm{1}_{S_{l_1,l_2}} \rangle \big)
\\
&\lesssim \| f\|_{L^{\infty }(\mu)}\| g\|_{L^{\infty }(\mu)}
\sum_{r=0}^{N_0-1}\sum_{i=0}^{2^{rn}}
\sum_{l_1,l_2\in \{0,1\}}
|\langle T\mathbbm{1}_{I^{i,r}},\mathbbm{1}_{S_{l_1,l_2}}\rangle |.
\end{align*}



By Lemma \ref{truncaT}, the truncated operator $T_{\gamma, Q}$ 
is bounded on $L^2(\mu)$ with bounds $\| T_{\gamma ,Q}\|_{2,2}
\leq 
\frac{\ell(Q)}{\gamma^\alpha }\leq 2^{N_0}$. Then, since $S_{l_1,l_2}\subset \tilde S$, we have
%
\begin{align*}
|\langle T\mathbbm{1}_{I^{i,r}},\mathbbm{1}_{S_{l_1,l_2}}\rangle |
&\leq \| T\|_{2,2}
\mu(I^{i,r})^{\frac{1}{2}}
\mu(S_{l_1,l_2})^{\frac{1}{2}}
\leq 2^{N_0}
\mu(I^{i,r})^{\frac{1}{2}}
\mu(\tilde S)^{\frac{1}{2}}.
\end{align*}
With this,
\begin{align*}
|T_2|&\lesssim \mu(\tilde S)^{\frac{1}{2}}\| f\|_{L^{\infty }(\mu)}\| g\|_{L^{\infty }(\mu)}
2^{N_0}\sum_{r=0}^{N_0-1}\sum_{i=0}^{2^{rn}}
\mu(I^{i,r})^{\frac{1}{2}}
\\
&\lesssim \mu(\tilde S)^{\frac{1}{2}}\| f\|_{L^{\infty }(\mu)}\| g\|_{L^{\infty }(\mu)}
2^{N_0}
\mu(Q)^{\frac{1}{2}}
N_02^{N_0n}
\\
&
\lesssim \mu(\tilde S)^{\frac{1}{2}}\| f\|_{L^{\infty }(\mu)}\| g\|_{L^{\infty }(\mu)}
2^{N_0(n+3)}
<\langle f\rangle_Q \langle g\rangle_Q \mu(Q). 
\end{align*}
In the second last inequality we used  $\mu(I^{i,r})\leq \mu(Q)\leq \rho (Q)\ell(Q)^\alpha \lesssim 
2^{N_0}$ and so, 
$\mu(Q)^{\frac{1}{2}}\leq 2^{N_0}$. 
The last inequality holds because 
$\tilde S\subset C_N$ and from the choice of $N$ in \eqref{Small}.

All this work finally proves Proposition \ref{resultattached}, and thus the estimate \eqref{estsquare} of Proposition \ref{dualbysquare} for 
$\langle TP_{Q}^Nf,g_1\rangle $, the first term in \eqref{fdecom}. 

To deal with the second term in \eqref{fdecom}, namely 
$\langle Tf_1,g_{1,\partial}\rangle $, we note first that the reasoning to estimate
$D$, $C$, $N_i$, and $P_i$ can be applied unchanged to this new case. For the term $A-G_1,-G_2$, we implement a small change. Since the expression is completely symmetrical with respect the cubes $I$ and $J$, we can switch the roles played by these cubes,
\begin{align*}
A-G_1-G_2
&=\sum_{\substack{I\parallel J\\ \rdist(I,J)>0}}
\langle f,h_{I}\rangle 
\langle g, h_{J}\rangle
\langle Th_I,h_J\rangle
\\
&
+\sum_{\substack{I\Bumpeq J\\ 
\widehat{J}\subset \widehat{I}}}
\langle f, h_{I}\rangle \langle g, h_{J}\rangle
\langle T(h_I-h_{I}^{\widehat{J}, Q}), h_{J}\rangle
+\sum_{\substack{I\Bumpeq J\\ 
\widehat{I}\subset \widehat{J}}}
\langle f, h_{I}\rangle \langle g, h_{J}\rangle
\langle Th_{I}, h_J-h_{J}^{\widehat{I}, Q}\rangle 
\\
&=H_1+H_2+H_3. 
\end{align*}

$$
A=\sum_{J\in \mathcal{D}(Q)_{\geq N}}\sum_{I\in J_{\theta}}
\langle f,h_I\rangle \langle g,h_J\rangle
\langle Th_I,h_J \rangle .
$$
We now add and subtract the term
$$
{\rm Ad}=\sum_{J\in \mathcal{D}(Q)_{\geq N}}\sum_{I\in J_{\rm over}}
\langle f,h_I\rangle \langle g,h_J\rangle
\langle Th_I,h_J \rangle + {\rm Ad}_1+{\rm Ad}_2.
$$
where ${\rm Ad}_i$ are exactly the same terms as before containing the siblings of each cube. This terms  satisfies $|{\rm Ad}|\lesssim \Lambda_{\mathcal S, a}(f, g)$
and we rewrite previous reasoning to obtain 
\begin{align*}
   & |A-{\rm Ad}|\lesssim \sum_{I\in \mathcal{D}(Q)_{N}}
\langle f\rangle_I
\langle T\mathbbm{1}_I,\sum_{J\in \mathcal{D}(Q)_{\geq N}\cap I_{\theta}}
\langle g,h_J\rangle h_J \rangle 
\\&
=\sum_{I\in \mathcal{D}(Q)_{N}}
\sum_{i=1}^{3^n}\langle f\rangle_I 
\langle g\rangle_{J_{i,R_i}}
\langle T\mathbbm{1}_I,\mathbbm{1}_{J_{i, R_i}} \rangle 
-\sum_{I\in \mathcal{D}(Q)_{N}}
\sum_{i=1}^{3^n}\langle f\rangle_I 
\langle g\rangle_{J_{i,r_i}}
\langle T\mathbbm{1}_I,\mathbbm{1}_{J_{i,r_i}} \rangle
\\&=T_{1}
-T_{2},
\end{align*}
in similar way as we did before. 
We can reparametrize the sums in, say, $T_2$ as before, 
to write
\begin{align*}
    |T_{2}|&\lesssim \| f\|_{L^{\infty }(\mu)}\| g\|_{L^{\infty }(\mu)}
\sum_{r=0}^{N_0-1}\sum_{j=0}^{2^{rn}}
\sum_{l_1,l_2=0}^1
|\langle T\mathbbm{1}_{S^{l_1,l_2}}, \mathbbm{1}_{J^{j,r}}\rangle |.
\end{align*}
Now we note that the cubes $I\in \tilde{\mathcal D}$ are open and so 
$N$ can be chosen large enough so that 
\begin{align*}
|T_2|\lesssim \mu(\tilde S)^{\frac{1}{2}}\| f\|_{L^{\infty }(\mu)}\| g\|_{L^{\infty }(\mu)}
2^{N_0(n+3)}
<\langle f\rangle_Q \langle g\rangle_Q\mu(Q).
\end{align*}
This ends the estimates of 
Proposition \ref{resultattached} and Proposition \ref{dualbysquare} for $\langle Tf_1,g_{1,\partial}\rangle $.  

The remaining part is dealt with an iteration process developed in the next section. 

\end{proof}

\section{An iteration process to end the proof of Proposition \ref{dualbysquare}}

For the last term in \eqref{fdecom}, that is
$\langle Tf_{1,\partial},g_{1,\partial}\rangle $, we reason by reiteration. 
We first note that the supports of $f_{1,\partial}$ and $g_{1,\partial}$ are contained in the union of 
$\partial I$ for all $I\in \mathcal{D}^1(Q)_{\geq N}$. 
This set, which we denote by $\partial \mathcal{D}^1(Q)_{\geq N}$, consists of the intersection with $Q$ of finitely many euclidean affine spaces of dimension $n-1$, which are either pairwise parallel or pairwise perpendicular.  

Let now $\mathcal D=\mathcal D^2=a_2+\mathcal{D}_1=\{ a_2+I :  
I\in \mathcal{D}_1\}$ defined in $\eqref{grids}$. We consider the families of cubes $\mathcal{D}^2(Q)_{\geq N}$ 
and decompose $f_{1,\partial}=f_2+f_{2,\partial}$ 
where $f_{2,\partial}=f_{1,\partial}\mathbbm{1}_{\partial \mathcal{D}^2(Q)_{\geq N}}$ and similar for $g_{1,\partial}$. Now, using the Haar wavelet system 
$(h_I)_{I\in \mathcal D^2}$ 
we decompose as before:
\begin{align}\label{fdecom2}
\langle Tf_{1,\partial},g_{1,\partial}\rangle 
&=\langle Tf_{1,\partial},g_2\rangle 
+\langle Tf_2,g_{2,\partial}\rangle 
+\langle Tf_{2,\partial},g_{2,\partial}\rangle . 
\end{align}
Then we can apply all previous work developed in the first part of the proof of Proposition \ref{dualbysquare} to estimate the first two terms. 

For the third term, we note that the supports of $f_{2,\partial}$ and $g_{2,\partial}$ are now contained in $\partial \mathcal{D}^1(Q)_{\geq N}\cap \partial \mathcal{D}^2(Q)_{\geq N}$. Moreover, $\partial \mathcal{D}^2(Q)_{\geq N}$ consists of the intersection with $Q$ of finitely many euclidean affine spaces of dimension $n-1$, which are either pairwise parallel or pairwise perpendicular, and also either parallel or perpendicular
to every affine space of dimension $n-1$ of 
$\partial \mathcal{D}^1(Q)_{\geq N}$. Then 
$\partial \mathcal{D}^1(Q)_{\geq N}\cap \partial \mathcal{D}^2(Q)_{\geq N}$ is 
a set consisting of finitely many euclidean affine spaces of dimension $n-2$ intersected with $Q$. 

Then, by repeating the same argument $k=n-[\alpha ]+\delta(\alpha -[\alpha])$ times in total, we obtain   
$P_{M_N}f=\sum_{i=1}^{k}f_i+f_{i,\partial}$ and similar for $P_{M_N}g$ such that the appropriate estimates hold for $|\langle Tf_i, \cdot \rangle |$ and $|\langle \cdot , T^*g_i\rangle |$ for all 
with $i\in \{1, \ldots ,k\}$  and 
the functions $f_{k,\partial }, g_{k,\partial }$,  are supported on 
$\bigcap_{i=1}^{k}\partial \mathcal{D}^i(Q)_{\geq N}$. 
By repeating previous reasoning on parallel and perpendicular affine spaces, we conclude that this set consists of the intersection with $Q$ of finitely many euclidean affine spaces of dimension $n-k=[\alpha]-\delta(\alpha -[\alpha])$, which are either pairwise parallel or pairwise perpendicular.

But now we can show that $\bigcap_{i=1}^{k}\partial \mathcal{D}^i(Q)_{\geq N}$  
has measure zero with respect to $\mu$. 
Let $I$ be an arbitrary $n-k$ dimensional dyadic cube with side length $\ell(I)$. Let 
$(J_i)_{i=1}^{m}$ be a 
family of pairwise disjoint $n$-dimensional cubes $J_i$ with fixed side length $r>0$ such that 
$J_i\cap I\neq \emptyset$ and $I\subset \bigcup_{i}J_i$. This family has 
cardinality  
$m=(\frac{\ell(I)}{r})^{n-k}$. Then 
$$
\mu(I)\leq \sum_{i=1}^{m}\mu(J_{i})
\lesssim (\frac{\ell(I)}{r})^{n-k}r^{\alpha}
=\ell(I)^{n-k}r^{\alpha -n+k}. 
$$
Since 
$\alpha -n+k=\alpha-[\alpha]+\delta(\alpha -[\alpha])>0$,
we have  
$$
\mu(I)\lesssim \ell(I)^{n-k}
\lim_{r\rightarrow 0}r^{\alpha -n+k}=0
$$ 
for all cubes $I$ of dimension $n-k$. This shows that $\mu(\bigcap_{i=1}^{k}\partial \mathcal{D}^i(Q)_{\geq N})=0$ and so, $\langle Tf_{k,\partial},g_{k,\partial}\rangle=0$. This fully 
finishes the proof of the Proposition \ref{dualbysquare}. 

%
%
%
%
%
%

\section{Boundedness of the square functions on $L^2$ and weak $L^1$}

\begin{proposition}
For $i\in \{1,\ldots , k\}$ and $j\in \{1,2, 3\}$, let $S_{j,i}^1$ and $S_{j,i}^{-1}$ be the square functions of Definition \ref{square}, which we simply denote by $S_j$ and $S_j'$ respectively. Then 
$S_j, S_j'$  are bounded on $L^2(\mu)$ and 
\begin{align*}
\|S_jf\|_{L^2(\mu)}+\|S_j'f\|_{L^2(\mu)}
\lesssim  a^{\frac{1}{2}} 
\|f\|_{L^2(\mu)}, 
\end{align*}
with $a =\sup_{\substack{m\in \mathbb N\\e\in \mathbb Z}}\sup_{I\in \mathcal D}(1+\rho(2^eI))(1+\rho(I))\sup_{J\in I_{e,m}}F(I,J)$.
\end{proposition}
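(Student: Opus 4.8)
The plan is to prove the $L^2(\mu)$ bound for each of the six square functions by exploiting their common structure: each $S_j f$ (resp. $S_j' f$) has, after squaring, the shape $\sum_{I\in\mathcal D} c_I\,|\langle f,h_I\rangle|^2\,w_I(x)$ where $w_I$ is a non-negative function supported on a cube comparable to $I$ (typically $3I$, or the displaced sets $J_I'$ attached to $I$) with $\int w_I\,d\mu \lesssim a$ uniformly in $I$. Integrating in $x$ and using $\int w_I\,d\mu\lesssim a$ gives $\|S_jf\|_{L^2(\mu)}^2 \lesssim a\sum_I |\langle f,h_I\rangle|^2$, and then Plancherel's inequality (Lemma \ref{Planche}) gives $\sum_I|\langle f,h_I\rangle|^2\lesssim\|f\|_{L^2(\mu)}^2$. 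So the entire $L^2$ statement reduces to the bookkeeping identity $\int w_I\,d\mu\lesssim a$ for each square function, which I would verify case by case.

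\textbf{Step 1: reduce to a uniform mass bound on the weights.} Fix $j$ and write $S_jf(x)^2=\sum_{I}c_I|\langle f,h_I\rangle|^2 w_I(x)$ with $c_I\geq 0$ absorbing the $m$-sums, the $k$-sums and the factors $a_I=\sup_{J\in I_{e,m}}F(I,J)$, $b_I$. Then
\begin{align*}
\|S_jf\|_{L^2(\mu)}^2=\sum_{I\in\mathcal D}c_I|\langle f,h_I\rangle|^2\int w_I(x)\,d\mu(x)\le \Big(\sup_{I}c_I\textstyle\int w_I\,d\mu\Big)\sum_{I}|\langle f,h_I\rangle|^2.
\end{align*}

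\textbf{Step 2: bound $c_I\int w_I\,d\mu$ for each square function.} For $S_{2,i}^1$, $w_I=\mathbbm 1_{3I}$, $c_I=a_I\mu(I)/\ell(I)^{2\alpha}$, so $c_I\int w_I\,d\mu = a_I\mu(I)\mu(3I)/\ell(I)^{2\alpha}\lesssim a_I\rho(I)^2\lesssim a$. For $S_{2,i}^{-1}$, $w_J=\mathbbm 1_J/\mu(J)$ and $c_J=b_J\rho(2^eJ)$, giving $c_J\int w_J\,d\mu=b_J\rho(2^eJ)\lesssim a$. For $S_{1,i}^{\pm 1}$, after integrating the inner $\mathbbm 1_{J_I'}/\mu_\epsilon(J_I')$ we get $\int \mathbbm 1_{J_I'}/\mu_\epsilon(J_I')\,d\mu\le 1$, and summing over $J\in I_{-e,m}$ against $\mu(J)/\ell(I)^\alpha$ — together with $\operatorname{card}(I_{-e,m})\lesssim m^{n-1}$, the decay $m^{-(\alpha+\delta)}$ in $m$, and $\mu(mI)\lesssim\rho(I)\ell(mI)^\alpha$ controlled by Lemma \ref{Abel} — yields a bound $\lesssim a_I\rho(I)\lesssim a$; the same works for $S_{3,i}^{\pm1}$ using $\sum_{k}k^{-\delta}\operatorname{card}(I_{-e,1,k})\lesssim 1$, the fact that the $J_I'$ are pairwise disjoint inside $3I$, and $\mu(R\cap J)\le\mu(J)$. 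In each case the uniform bound is exactly the quantity $a=\sup_{m,e}\sup_I(1+\rho(2^eI))(1+\rho(I))\sup_{J\in I_{e,m}}F(I,J)$.

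\textbf{Step 3: apply Plancherel.} Since $f\in L^2(\mu)$ is compactly supported on $Q\in\mathcal D$, Lemma \ref{Planche} gives $\sum_{I\in\mathcal D}|\langle f,h_I\rangle|^2\lesssim\|f\|_{L^2(\mu)}^2$, hence $\|S_jf\|_{L^2(\mu)}\lesssim a^{1/2}\|f\|_{L^2(\mu)}$; summing the six cases and the (finitely many) grids $\mathcal D_i$ gives the stated inequality. The main obstacle is Step 2: one must check that in every square function the geometric multiplicities (the cardinalities of $I_{e,m}$ and $I_{e,1,k}$), the summability in $m$ and $k$, the disjointness of the displaced sets $J_I'$, and the density factors $\rho(I)$, $\rho(2^eI)$ combine to give precisely the uniform constant $a$ — in particular that the potentially problematic denominators $\mu_\epsilon(J_I')$ and $\mu(J)$ never cause a loss, which is where the construction of $J_I'$ via Lemma \ref{subsets} (ensuring $\mu(J)/2<\mu(J_I')\le\mu(J)$) and the choice of $\epsilon$ are essential. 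The weak $L^1$ statement, although announced in the section title, is not part of this proposition and would be handled separately by a Calderón–Zygmund decomposition adapted to $\mu$; I would not address it here.
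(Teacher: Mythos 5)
Your approach is essentially the paper's: integrate $S_j f(x)^2$ in $x$ to obtain $\sum_I c_I|\langle f,h_I\rangle|^2\int w_I\,d\mu$, establish a uniform bound on the mass $c_I\int w_I\,d\mu\lesssim a$ case by case, and finish with Plancherel (Lemma \ref{Planche}); this matches the paper's treatment of all six functions. One imprecision in your Step 2 for $S_{1,i}^{\pm 1}$ deserves flagging: the cardinality bound $\card(I_{-e,m})\lesssim m^{n-1}$ cannot supply summability in $m$, since $\sum_m m^{n-1}/m^{\alpha+\delta}$ diverges as soon as $n-1\geq \alpha+\delta-1$. What actually closes the argument is that the cubes $J\in I_{-e,m}$ are pairwise disjoint with $\bigcup_{J\in I_{-e,m}}J=I_m=mI\setminus(m-1)I$, so $\sum_{J\in I_{-e,m}}\mu(J)=\mu(I_m)$, and then Lemma \ref{Abel} in its actual form, namely
\begin{equation*}
\frac{1}{\ell(I)^{\alpha}}\sum_{m\geq 1}\frac{\mu(I_m)}{m^{\alpha+\delta}}\lesssim\rho(I),
\end{equation*}
finishes the bound; the statement $\mu(mI)\lesssim\rho(I)\ell(mI)^\alpha$ you attribute to that lemma is not what it says and would not suffice on its own. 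Similarly, for $S_{3,i}^{1}$ the factor $k^{-\delta}$ you invoke does not appear in Definition \ref{square}; the paper simply uses that the $J\in\bigcup_k I_{-e,1,k}$ are pairwise disjoint with $\mu(R\cap J)\neq 0$ only when $J\subset R$, so $\sum_{k,J}\mu(R\cap J)\leq\mu(R)$. With those two adjustments your outline reproduces the paper's proof.
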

\begin{proof}
As in the classical case, boundedness of these  square functions follows from Plancherel's inequality. 
The proofs for $S_2f$, $S_2'f$, and $S_3'f$ are straightforward:  
\begin{align*}
\|S_{2}'f\|_{L^2(\mu)}^2
&=\| \Big( \sum_{J\in \mathcal{D}_i}
a_{J}\rho(2^eJ)
|\langle f,h_{J}\rangle |^2
\frac{\mathbbm{1}_{J}}{\mu(J)}
\Big)^{\frac{1}{2}}\|_{L^2(\mu)}^2
\\
&= \sum_{J\in \mathcal{D}_i}
a_{J}\rho(2^eJ)
|\langle f,h_{J}\rangle |^2\leq a\| f\|_{L^2(\mu)}^2. 
\end{align*}
Meanwhile, 
\begin{align*}
\|S_{2}f\|_{L^2(\mu)}^2
&=\| \Big( \sum_{I\in \mathcal{D}_i}
a_{I}
|\langle f,h_{I}\rangle |^2
\frac{\mu(I)}{\ell(I)^{2\alpha} }\mathbbm{1}_{3I}
\Big)^{\frac{1}{2}}\|_{L^2(\mu)}^2
= \sum_{I\in \mathcal{D}_i}
a_{I}
|\langle f,h_{J}\rangle |^2\frac{\mu(I)}{\ell(I)^{\alpha} }\frac{\mu(3I)}{\ell(I)^{\alpha}}
\\
&\lesssim \sum_{I\in \mathcal{D}_i}
a_{I}
|\langle f,h_{J}\rangle |^2\rho(I)\rho(3I)
\leq a\| f\|_{L^2(\mu)}^2. 
\end{align*}
And since $\mu(I)\leq \mu_{\epsilon}(I)$,  
\begin{align*}
\|\tilde S_{3}'f\|_{L^2(\mu)}^2
&=\| \Big( \sum_{I\in \mathcal{D}_i}
a_{I}
|\langle f,h_{I}\rangle |^2
\frac{\mathbbm{1}_{I'}}{\mu_{\epsilon}(I')}
\Big)^{\frac{1}{2}}\|_{L^2(\mu)}^2
\leq \sum_{I\in \mathcal{D}_i}
a_{I}
|\langle f,h_{J}\rangle |^2
\leq a\| f\|_{L^2(\mu)}^2. 
\end{align*}
The proofs for $S_1f$, $S_1'f$, and $S_3f$ require a bit more work. 
By using again that $\mu(J'_{I})\leq \mu_{\epsilon}(J'_{I})$, we have 
\begin{align*}
\|S_{1}f\|_{L^2(\mu)}^2
&=\|\Big( \sum_{\substack{m\in \mathbb Z\\ m\geq 2}}\frac{1}{m^{\alpha +\delta}}\sum_{I}
a_{I}
|\langle f,h_{I}\rangle |^2
\frac{1}{\ell(I)^{\alpha}}
\sum_{J \in I_{-e,m}}\frac{\mu(J)}{\mu_{\epsilon}(J_I')}\mathbbm{1}_{J_I'}\Big)^{\frac{1}{2}}\|_{L^2(\mu)}^2
\\
&
\leq \sum_{\substack{m\in \mathbb Z\\ m\geq 2}}\frac{1}{m^{\alpha +\delta}}\sum_{I}
a_{I}
|\langle f,h_{I}\rangle |^2
\frac{1}{\ell(I)^{\alpha}}
\sum_{J \in I_{-e,m}}\mu(J).
\end{align*}
Now we note that 
the cubes $J\in I_{-e,m}$ are pairwise disjoint and their union is 
$I_m=mI\setminus (m-1)I$. Then 
\begin{align*}
\|S_{1}f\|_{L^2(\mu)}^2
&
\leq \sum_{I}
a_{I}
|\langle f,h_{I}\rangle |^2
\frac{1}{\ell(I)^{\alpha}}
\sum_{\substack{m\in \mathbb Z\\ m\geq 2}}\frac{1}{m^{\alpha +\delta}}\mu(I_{m})
\\
&
\lesssim \sum_{I}
a_{I} \rho(I)
|\langle f,h_{I}\rangle |^2
\lesssim a\|f\|_{L^2(\mu)}^2. 
\end{align*}
by using Lemma \ref{Abel}, namely,  
$
\frac{1}{\ell(I)^{\alpha}}\sum_{\substack{m\in \mathbb Z\\ m\geq 2}}\frac{1}{m^{\alpha +\delta}}\mu(I_{m})
\lesssim \rho(I) 
$. 

To deal with $S_{1}'g$, we denote $J_e\in \mathcal D$ such that $J\subset J_e$ and $\ell(J_e)=2^e\ell(J)$. Then
\begin{align*}
\|S_{1}'g\|_{L^2(\mu)}^2
&= \|\Big( \sum_{\substack{m\in \mathbb Z\\ m\geq 2}}\frac{1}{m^{\alpha +\delta}}\sum_{J}
b_{J}
|\langle g,h_{J}\rangle |^2
\frac{1}{\ell(J_e)^{\alpha}}
\sum_{I\in J_{e,m}}\frac{\mu(I)}{\mu_{\epsilon}(J_I')}\mathbbm{1}_{J_I'}\Big)^{\frac{1}{2}}\|_{L^2(\mu)}^2
\\
&
\leq \sum_{\substack{m\in \mathbb Z\\ m\geq 2}}\frac{1}{m^{\alpha +\delta}}\sum_{J}
b_{J}
|\langle g,h_{J}\rangle |^2
\frac{1}{\ell(J_e)^{\alpha}}
\sum_{I \in J_{e,m}}\mu(I). 
\end{align*}
Similarly as before, the cubes $J\in I_{-e,m}$ are pairwise disjoint and their union is 
$(J_e)_m=mJ_e\setminus (m-1)J_e$. Then 
\begin{align*}
\|S_{1}'g\|_{L^2(\mu)}^2
&
\leq \sum_{J}
b_{J}
|\langle g, h_{J}\rangle |^2
\frac{1}{\ell(J_e)^{\alpha}}
\sum_{\substack{m\in \mathbb Z\\ m\geq 2}}\frac{1}{m^{\alpha +\delta}}\mu((J_e)_m)
\\
&
\lesssim \sum_{J}
b_{J} \rho(J_e)
|\langle g,h_{J}\rangle |^2
\lesssim b\|g\|_{L^2(\mu)}^2, 
\end{align*}
where we used Lemma \ref{Abel}. 
Finally we work with $S_3f$. 
\begin{align*}
\|S_3f\|_{L^2(\mu)}^2
&=\| \Big(\sum_{I\in {\mathcal D}_i}
a_{I}
|\langle f,h_{I}\rangle |^2
\sum_{R\in \{I,\hat{I}\}}
\sum_{k=2^{\theta e}}^{2^{e}}\sum_{J\in I_{-e,1,k}}
\frac{\mu(R\cap J)}{\mu(R)}
\frac{1}{\mu_{\epsilon}(J_I')}\mathbbm{1}_{J'_I)}\Big)^{\frac{1}{2}}\|_{L^2(\mu)}^2
\\
&\leq \sum_{I\in {\mathcal D}_i}
a_{I}
|\langle f,h_{I}\rangle |^2
\sum_{R\in \{I,\hat{I}\}}\frac{1}{\mu(R)}
\sum_{k=2^{\theta e}}^{2^{e}}\sum_{J\in I_{-e,1,k}}\mu(R\cap J). 
\end{align*}
We note that $\mu(R\cap J)\neq 0$ only if 
$J\subset R\subset \widehat{I}$. Moreover,   
the cubes $J\in \cup_{k=2^{\theta e}}^{2^{e}}I_{-e,1,k}$ are pairwise disjoint and their union is $\widehat{I}$. Then
$$
\sum_{k=2^{\theta e}}^{2^{e}}\sum_{J\in I_{-e,1,k}}\mu(R\cap J)
=\sum_{k=2^{\theta e}}^{2^{e}}\sum_{J\in I_{-e,1,k}}\mu(J)\leq \mu(R)
$$
With this, 
\begin{align*}
\|S_3f\|_{L^2(\mu)}^2&\lesssim \sum_{I\in {\mathcal D}_i}
a_{I}
|\langle f,h_{I}\rangle |^2\leq a
\|f\|_{L^2(\mu)}^2. 
\end{align*}
\end{proof}

To extend the result to weak $L^1$ we use an adaptation of the Calder\'on-Zygmund decomposition to non-doubling measures. 
%

\begin{proposition}\label{weakL1} For $i\in \{1,\ldots , k\}$, $j\in \{1,2, 3\}$, and $s_e\in \{1, -1\}$, let $S_{j,i}^{s_e}$ be the square functions of Definition \ref{square}. 

Let 
$a =\sup_{\substack{m\in \mathbb N\\e\in \mathbb Z}}\sup_{I\in \mathcal D}(1+\rho(2^eI))^2(1+\rho(I))^2\sup_{J\in I_{e,m}}F(I,J)$. 
Then 
$$
\mu(\{ S_{j,i}^{s_e}f(x)>\lambda \})\lesssim 
\frac{a^{\frac{1}{2}}}{\lambda}
\| f\|_{L^1(\mu)} 
$$
for all $f$ $\mu$-integrable  
and $\lambda >0$. 
\end{proposition}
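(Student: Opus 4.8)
The plan is to prove the weak $(1,1)$ bound via a Calder\'on-Zygmund decomposition adapted to non-doubling measures, reducing the estimate to the $L^2$ bound from the previous proposition plus a localization argument for the ``bad'' part that uses the cancellation of the Haar coefficients.

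\textbf{Step 1: Calder\'on-Zygmund decomposition.} Fix $f\in L^1(\mu)$ and $\lambda>0$. Because $\mu$ has power growth, we can run a Calder\'on-Zygmund stopping-time argument on the dyadic grid $\mathcal D=\mathcal D_i$: select a family of maximal cubes $\{Q_j\}\subset\mathcal D$ on which $\langle |f|\rangle_{Q_j}>\lambda$ (more precisely $\langle |f|\rangle_{Q_j}>c\lambda$ with a suitable dimensional constant, comparing with the parent to get $\langle|f|\rangle_{Q_j}\lesssim\lambda$ when the measure is non-doubling one must be slightly careful, but power growth of $\mu$ together with the fact that consecutive dyadic cubes have comparable size makes $\langle|f|\rangle_{\widehat Q_j}\le\lambda$ usable after passing to the grid). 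Write $f=g+b$ where $g=f\mathbbm 1_{\Omega^c}+\sum_j\langle f\rangle_{Q_j}\mathbbm 1_{Q_j}$ with $\Omega=\bigcup_jQ_j$, and $b=\sum_jb_j$, $b_j=(f-\langle f\rangle_{Q_j})\mathbbm 1_{Q_j}$. Then $\|g\|_{L^2(\mu)}^2\lesssim\lambda\|f\|_{L^1(\mu)}$ and $\|g\|_{L^\infty(\mu)}\lesssim\lambda$, while $\sum_j\mu(Q_j)\lesssim\lambda^{-1}\|f\|_{L^1(\mu)}$, each $b_j$ is supported on $Q_j$ and has $\int b_j\,d\mu=0$.

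\textbf{Step 2: the good part.} Since $S_{j,i}^{s_e}$ is sublinear, $\mu(\{S_{j,i}^{s_e}f>\lambda\})\le\mu(\{S_{j,i}^{s_e}g>\lambda/2\})+\mu(\{S_{j,i}^{s_e}b>\lambda/2\})$. For the good part, Chebyshev and the $L^2$ bound (previous proposition) give $\mu(\{S_{j,i}^{s_e}g>\lambda/2\})\lesssim\lambda^{-2}a\|g\|_{L^2(\mu)}^2\lesssim a\lambda^{-1}\|f\|_{L^1(\mu)}$, as desired (absorbing $a$ versus $a^{1/2}$ is harmless since $a\ge 1$ by definition, or one normalizes).

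\textbf{Step 3: the bad part --- the main obstacle.} Let $\widetilde\Omega=\bigcup_j CQ_j$ be a dilated union; then $\mu(\widetilde\Omega)\lesssim\sum_j\ell(CQ_j)^\alpha\lesssim\sum_j\mu(Q_j)\cdot\rho(Q_j)$, and using the density factor $\rho$ built into the definition of $a$ (this is exactly why the hypothesis carries $(1+\rho(2^eI))^2(1+\rho(I))^2$ rather than a bare $F$) one gets $\mu(\widetilde\Omega)\lesssim a^{1/2}\lambda^{-1}\|f\|_{L^1(\mu)}$. It remains to bound $\mu(\{x\notin\widetilde\Omega:S_{j,i}^{s_e}b(x)>\lambda/2\})$. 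Here I would exploit the structure of the square functions: each is built from $|\langle b,h_I\rangle|^2$ multiplied by an indicator of a set $J_I'$ (or $3I$, or $I$) which is \emph{contained in a bounded dilate} of $I$. The key point is that if $b_j$ is supported on $Q_j$ and $h_I$ has a child or parent meeting $Q_j$ while $I$ is much larger than $Q_j$ and not contained in $CQ_j$, then the mean-zero property of $b_j$ forces $\langle b_j,h_I\rangle$ to vanish or be controlled, because $h_I$ is constant on $Q_j$ whenever $Q_j\subset I$ strictly inside a single Haar piece; for the remaining cubes $I$ with $\ell(I)\lesssim\ell(Q_j)$ or $I$ not separated, the associated indicator set lands inside $\widetilde\Omega$, so contributes nothing off $\widetilde\Omega$. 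Thus for $x\notin\widetilde\Omega$, only ``boundary'' Haar cubes survive, and one estimates $\|S_{j,i}^{s_e}b\|_{L^1(\widetilde\Omega^c,\mu)}$ (or $L^2$) term by term, summing $\sum_j\|b_j\|_{L^1(\mu)}\lesssim\|f\|_{L^1(\mu)}$; Chebyshev then closes the bound. The hard part is making the separation/cancellation bookkeeping for the three different square functions $S_1,S_2,S_3$ uniform, and tracking that the weights $a_I,b_J$ combined with $\rho$-factors give exactly $a^{1/2}$; the $S_1$-type square function, with its sum over $m$ and the displaced sets $J_I'$ of possibly different location than $I$, is the delicate case and will require using Lemma~\ref{Abel} together with the disjointness of the annuli $I_m$ exactly as in the $L^2$ proof, now localized away from $\widetilde\Omega$.

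I would organize the write-up so that Steps 1--2 are dispatched quickly by citation to the standard non-homogeneous Calder\'on-Zygmund decomposition (as in \cite{NTV1997}) and the preceding $L^2$ proposition, and spend the bulk of the argument on Step 3, treating $S_{2,i}^{\pm1}$ first (easiest, since its indicator is $3I$ or $I$), then $S_{3,i}^{\pm1}$, then $S_{1,i}^{\pm1}$, in each case isolating the surviving Haar cubes for $x\notin\widetilde\Omega$ and invoking the $\rho$-bounds already used in the $L^2$ estimate.
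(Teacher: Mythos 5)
The central gap is in Step 3, in the bound for the dilated exceptional set. You write
$\mu(\widetilde\Omega)\lesssim\sum_j\ell(CQ_j)^\alpha\lesssim\sum_j\mu(Q_j)\rho(Q_j)$, but the second inequality is backwards: the density bound reads $\mu(Q)\le\rho(Q)\ell(Q)^\alpha$, so it gives a \emph{lower} bound $\ell(Q_j)^\alpha\ge\mu(Q_j)/\rho(Q_j)$, not an upper bound of the form $\ell(Q_j)^\alpha\lesssim\mu(Q_j)\rho(Q_j)$. For a general measure of power growth there is no reverse inequality (the measure may be arbitrarily thin, even null, on a selected cube while its side length is large), so $\sum_j\ell(CQ_j)^\alpha$ cannot be controlled by $\lambda^{-1}\|f\|_{L^1(\mu)}$ with your stopping rule. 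This is precisely where non-doubling breaks the classical argument. The paper avoids the issue by building the dilation into the stopping condition: it selects maximal $P\in\mathcal D_i$ with $\mu(3P)^{-1}\int_P|f|\,d\mu>\lambda/a^{1/2}$, sets $E=\bigcup_P 3P$, and then $\mu(E)\le\sum_P\mu(3P)<(a^{1/2}/\lambda)\sum_P\int_P|f|\,d\mu\le(a^{1/2}/\lambda)\|f\|_{L^1(\mu)}$ follows directly from the selection, with no reverse power growth needed. Relatedly, the paper's bad pieces are $b_P=f\mathbbm 1_P-\langle f\mathbbm 1_P\rangle_{\widehat P}\mathbbm 1_{\widehat P}$, supported on the parent $\widehat P$ with cancellation on $\widehat P$ and constancy on the siblings of $P$; this is what makes the Haar-coefficient analysis collapse cleanly to $\widehat I=\widehat P$ or $\widehat I\subseteq P$. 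Your $b_j=(f-\langle f\rangle_{Q_j})\mathbbm 1_{Q_j}$ is the doubling-measure version, and your remark that $\langle|f|\rangle_{\widehat Q_j}\le\lambda$ "is usable after passing to the grid" does not hold for non-doubling $\mu$ (you cannot conclude $\|g\|_\infty\lesssim\lambda$ from it).

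The remainder of your outline is essentially sound and does match the paper's strategy once the decomposition is fixed: Chebyshev plus the $L^2$ bound for the good part; cancellation of $\langle b_P,h_I\rangle$ when $\widehat P\subsetneq\widehat I$ (or when $\widehat I$ sits in a sibling of $P$); localization of the indicator sets $I$, $3I$, $I'$ inside $E$ for the easy square functions $S_2$, $S_2'$, $S_3'$; and for $S_1$, the observation that only the scales $0\le r\le\log m$ survive off $E$, after which Lemma~\ref{Abel} and the $\rho$-weights absorbed into $a$ close the estimate. But as written, the argument for $\mu(\widetilde\Omega)$ would fail, so you must replace the stopping rule (and the ensuing decomposition) by the NTV/paper version with $\mu(3P)$ in the denominator.
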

\proof
Consider the collection $\mathcal P$ of dyadic cubes $P\in \mathcal D_i$ that are maximal with respect
to set inclusion and such that
$$
\mu(3P)^{-1}\int_{P}|f(x)|d\mu(x) >\frac{\lambda }{a^{\frac{1}{2}}}.
$$

Let $E$ be the union of all cubes $3P$ with $P\in {\mathcal P}$. 
We then define the Calder\'on-Zymund decomposition 
at level 
$\lambda /a_P$, that is $f=g+b$, given by
$$
    g=\sum_{P\in \mathcal P} \langle f\mathbbm{1}_{P}\rangle_{\widehat{P}}\, \mathbbm{1}_{\widehat{P}}+f\mathbbm{1}_{\mathbb{R}^n\setminus E}
\quad\text{and}\quad
b=\sum_{P\in \mathcal P}b_P,
$$
with $b_P=f\mathbbm{1}_{P}-\langle f\mathbbm{1}_{P}\rangle_{\widehat{P}}\, \mathbbm{1}_{\widehat{P}}$.

We now prove the estimate for all square functions, which we simply denote by $S_j$ when $s_e=1$ and $S_j'$ when $s_e=-1$. Since the first part of the proof is the same for all of them, we only make distinctions towards the end. 
First
\begin{align}\label{setmeasure}
\mu(\{ S_{j}f > \lambda\})
\leq \mu(\{ S_{j}g > \lambda/2\})
+\mu(\{ S_{j}b > \lambda/2\}).
\end{align}

Due the $L^2$ boundedness of $S_{j}$ and the fact that $g\lesssim \lambda /a^{\frac{1}{2}}$, we have
$$
\| S_{j}g \|_{L^2(\mu)}^{2}
\lesssim a \| g\|_{L^2(\mu)}^{2}
\lesssim a^{\frac{1}{2}} \lambda 
\| f\|_{L^1(\mu)}. 
$$
Then, by Chebyshev's inequality, we can control the first term in \eqref{setmeasure} as follows: 
$$
\mu(\{ S_{i}g > \lambda/2\})\lesssim \lambda^{-2}
\| S_{j}g\|_{L^2(\mu)}^2
\lesssim a^{\frac{1}{2}}\lambda^{-1} 
\| f\|_{L^1(\mu)}.
$$

We now work with $b=\sum_{P\in \mathcal P}b_P$, for which we have
$$
\mu(\{ S_{j}b (x)> \lambda/2\} ) \leq  \mu(E)+\lambda^{-1}\| S_{j}b\|_{L^1((\mathbb R^n\setminus E),\mu)}. 
$$

For the first term, we reason as follows.
The cubes $P\in \mathcal P$ are pairwise disjoint by maximality. With this and the stopping condition 
$\lambda < a^{\frac{1}{2}}\mu(3P)^{-1}\int_{P}|f(x)|d\mu(x)$, 
we have 
$$
\mu(E)\leq 
\sum_{P\in \mathcal P} \mu(3P)
<\sum_{P\in \mathcal P}
\frac{a^{\frac{1}{2}}}{\lambda}\|f\mathbbm{1}_{P}\|_{L^1(\mu)}\leq \frac{a^{\frac{1}{2}}}{\lambda}\|f\|_{L^1(\mu)}.
$$
For the second term, we distinguish between the different square functions, starting again with the most straihtforward cases: 
$S_{2}$, $S_2'$, and $S_3'$.

1) For $S_2$ we have 
\begin{align*}
\| S_{2}b \|_{L^1(\mathbb R^n \setminus E, \mu)} 
&\leq
\sum_{P\in \mathcal P}
\| S_{2}b_{P} \|_{L^1(\mathbb R^n \setminus E, \mu)} 
\\
&= \sum_{P\in \mathcal P}
\Big\| \Big( \sum_{I\in \mathcal D}
a_{I}
|\langle b_{P},h_{I}\rangle |^2
\frac{\mathbbm{1}_{I}}{\mu(I)}
\Big)^{\frac{1}{2}}\Big\|_{L^1(\mathbb R^n \setminus E, \mu)}. 
\end{align*}

For each fixed cube $P\in \mathcal P$ and varying $I\in \mathcal{D}_i$,  
we consider the following cases:  
\begin{enumerate}
\item[(a)] when $\widehat{P}\cap \widehat{I}=\emptyset $, we have $\langle b_{P},h_I\rangle=0$ due to the disjoint supports of $b_P$ and $h_I$, 
\item[(b)] when $\widehat{P} \subsetneq \widehat{I}$, we have $\langle b_{P}, h_I\rangle=0$ since  $h_I$ is constant on $\widehat{P}$ and $b_{P}$ has mean value zero on $\widehat{P}$. 
\item[(c)] when $\widehat{I} \subsetneq \widehat{P}$, we have that $\widehat{I} \subseteq P'$ for some $P' \in \ch{(\widehat{P})}$. We consider two cases:
\begin{enumerate}
\item If $P' \neq P$, we have $\langle b_{P},h_I\rangle =0$ since 
$b_{P}$ is constant on $P'$ and $h_I$ has mean value zero on $\widehat{I}\subseteq P'$. 

\item If $P' =P$, we have $I\subset \widehat{I} \subseteq P\subset E$ and so 
$\mathbbm{1}_{I}(x)=0$ for all 
$x\notin E$. Then these cubes $I$ do not contribute to $\| S_{1}b_P \|_{L^1(\mathbb R^n \setminus E, \mu)}$. 
\end{enumerate}
\end{enumerate}
The only case remaining is $\widehat{I}=\widehat{P}$. Then we have that $\langle b_{P},h_I\rangle\neq 0$ implies 
$I\in \child{P}$. 
With this, 
\begin{align*}
\| S_{2}b \|_{L^1(\mathbb R^n \setminus E, \mu)} 
&\leq \sum_{P\in \mathcal P}
\Big\| \Big( \sum_{I\in \mathcal \child{P}}
a_{I}
|\langle b_{P},h_{I}\rangle |^2
\frac{\mathbbm{1}_{I}}{\mu(I)}
\Big)^{\frac{1}{2}}\Big\|_{L^1(\mathbb R^n \setminus E, \mu)}
\\
&\leq  \sum_{P\in \mathcal P}
\Big\| \sum_{I\in \child{P}}
a_{I}^{\frac{1}{2}}
|\langle b_{P},h_{I}\rangle |
\frac{\mathbbm{1}_{I}}{\mu(I)^{\frac{1}{2}}}
\Big\|_{L^1(\mathbb R^n \setminus E, \mu)}
\\
&\leq a^{\frac{1}{2}} \sum_{P\in \mathcal P}
\sum_{I\in \child{P}}
|\langle b_{P},h_{I}\rangle |\mu(I)^{\frac{1}{2}}. 
\end{align*}

We now remind that 
$b_{P}=f\mathbbm{1}_{P}-\langle f\mathbbm{1}_{P}\rangle_{\widehat{P}}\, \mathbbm{1}_{\widehat{P}}$, and $h_{I}=\mu(I)^{\frac{1}{2}}\Big(
    \frac{1}{\mu(I)}\mathbbm{1}_{I}-\frac{1}{\mu(\widehat{I}\,)}\mathbbm{1}_{\widehat{I}}\Big)$. 
 Since   $\langle f\mathbbm{1}_{P}\rangle_{\widehat{P}}\, \mathbbm{1}_{\widehat{P}}$ is constant on $\widehat{I}$ and 
 $h_I$ has mean value zero on its support $\widehat{I}$,   
     we have 
\begin{align*}
    \langle b_{P} , h_I \rangle &=\langle f\mathbbm{1}_{P} , h_I \rangle 
    =\mu(I)^{\frac{1}{2}}(\langle f\mathbbm{1}_{P}\rangle_{I}-\langle f\mathbbm{1}_{P}\rangle_{\widehat{I}}).
\end{align*}
Then, since $\mu(I)\leq \mu(\widehat{I}\,)$, we get 
\begin{align}\label{dual1Q}
    |\langle b_{P},h_I\rangle| &
    \lesssim \sum_{R\in \{ I,\widehat{I}\}}\mu(R)^{-\frac{1}{2}}\|f\mathbbm{1}_{P\cap R}\|_{L^1(\mu)} 
    \lesssim \mu(I)^{-\frac{1}{2}}\|f\mathbbm{1}_{P\cap \widehat{I}}\, \|_{L^1(\mu)}. 
\end{align}

With this 
and the disjointness of the cubes $P\in \mathcal P$, we get  
\begin{align*}
\| S_{2}b \|_{L^1(\mathbb R^n \setminus E, \mu)} 
&
\lesssim a^{\frac{1}{2}} \sum_{P\in \mathcal P}
\|f\mathbbm{1}_{P}\|_{L^1(\mu)}
\leq a^{\frac{1}{2}} \|f\|_{L^1(\mu)}, 
\end{align*}
and hence
$$
\mu(\{ S_{2}b (x)> \lambda/2\} ) \leq  \frac{a^{\frac{1}{2}}}{\lambda} \| f\|_{L^1(\mu)}. 
$$

2) For $S_2'$ we can apply the same argument since all ideas previously used also hold for this operator. There is only point that needs a small modification: when $\widehat{I} \subseteq P \subset \widehat{P}$, we have $3I\subset 3\widehat{I} \subseteq 3P\subseteq E$ and so 
$\mathbbm{1}_{3I}(x)=0$ for all 
$x\notin E$. Then these cubes $I$ do not contribute to $\| S_{1}b_P \|_{L^1(\mathbb R^n \setminus E, \mu)}$, and again $\langle b_{P},h_I\rangle\neq 0$ implies $I\in \child{P}$. With this,
\begin{align*}
\| S_{2}'b \|_{L^1(\mathbb R^n \setminus E, \mu)} 
&\leq  \sum_{P\in \mathcal P}
\Big\| \Big( \sum_{I\in \mathcal D}
a_{I}
|\langle b_{P},h_{I}\rangle |^2
\frac{\mu(I)}{\ell(I)^{2\alpha}}\mathbbm{1}_{3I}
\Big)^{\frac{1}{2}}\Big\|_{L^1(\mathbb R^n \setminus E, \mu)}
\\
&\leq  \sum_{P\in \mathcal P}
\Big\| \sum_{I\in \child{P}}
a_{I}^{\frac{1}{2}}
|\langle b_{P},h_{I}\rangle |
\frac{\mu(I)^{\frac{1}{2}}}{\ell(I)^{\alpha}}
\mathbbm{1}_{3I}
\Big\|_{L^1(\mathbb R^n \setminus E, \mu)}
\\
&\leq a^{\frac{1}{2}} \sum_{P\in \mathcal P}
\sum_{I\in \child{P}}
|\langle b_{P},h_{I}\rangle |\mu(I)^{\frac{1}{2}}
\leq a^{\frac{1}{2}} \|f\|_{L^1(\mu)}.
\end{align*}

3) For $S_{3}'$ we can still apply the same reasoning: 
\begin{align*}
\|S_{3}'b \|_{L^1(\mathbb R^n \setminus E, \mu)} 
&\leq  \sum_{P\in \mathcal P}
\Big\| \Big( \sum_{I\in \mathcal D}a_{I}
|\langle b_P,h_{I}\rangle |^2
\frac{\mathbbm{1}_{I'}(x)}{\mu_{\epsilon}(I')}
\Big)^{\frac{1}{2}}\Big\|_{L^1(\mathbb R^n \setminus E, \mu)}
\\
&\leq  \sum_{P\in \mathcal P}
\Big\| \sum_{I\in \child(P)}a_{I}^{\frac{1}{2}}
|\langle b_P,h_{I}\rangle |
\frac{\mathbbm{1}_{I'}(x)}{\mu_{\epsilon}(I')^\frac{1}{2}}
\Big\|_{L^1(\mathbb R^n \setminus E, \mu)}
\\
&\leq  \sum_{P\in \mathcal P}
\sum_{I\in \child(P)}a_{I}^{\frac{1}{2}}
|\langle b_P,h_{I}\rangle |
\frac{\mu(I')}{\mu_{\epsilon}(I')^\frac{1}{2}}.
\end{align*}

Now, since $\mu(I')\leq \tilde  \mu(I')$ we have $\frac{\mu(I')}{\mu_{\epsilon}(I')^\frac{1}{2}}\leq \mu(I')^\frac{1}{2}$. With this, the inequality  
 $\mu(I')\leq \mu(I)$, and the fact that the cardinality of $\child{P}$ is $2^n$, we get
\begin{align*}
\|S_{3}'b \|_{L^1(\mathbb R^n \setminus E, \mu)}
&\lesssim a^{\frac{1}{2}}\sum_{P\in \mathcal P}
\sum_{I\in \child(P)}
\|f\mathbbm{1}_{P}\|_{L^1(\mu)}\mu(I)^{-\frac{1}{2}}
\mu(I')^\frac{1}{2}
\\
&\lesssim a^{\frac{1}{2}}\sum_{P\in \mathcal P}
\|f\mathbbm{1}_{P}\|_{L^1(\mu)}
\leq a^{\frac{1}{2}}
\|f\|_{L^1(\mu)}.
\end{align*}

4) For $S_1$, $S_1'$, and $S_3$ previous reasoning needs further modifications. In the case of $S_1$, we have as before 
\begin{align*}
\| S_{1}b &\|_{L^1(\mathbb R^n \setminus E, \mu)} 
\leq
\sum_{P\in \mathcal P}
\| S_{1}b_{P} \|_{L^1(\mathbb R^n \setminus E, \mu)} 
\\
&= \sum_{P\in \mathcal P}
\Big\| \Big( \sum_{m\geq 2}\frac{1}{m^{\alpha +\delta}} \sum_{I\in \mathcal D}
a_{I}
|\langle b_{P},h_{I}\rangle |^2
\frac{1}{\ell(I)^{\alpha}}\sum_{J\in I_{-e,m}}
\frac{\mu(J)}{\mu_{\epsilon}(J'_I)}
\mathbbm{1}_{J'_I}
\Big)^{\frac{1}{2}}\Big\|_{L^1(\mathbb R^n \setminus E, \mu)}. 
\end{align*}

For a fixed cube $P\in \mathcal P$ and varying cubes $I\in \mathcal{D}_i$, we get as before that 
$\langle b_{P}, h_I\rangle=0$ if either $\widehat{P}\cap \widehat{I}=\emptyset $, or $\widehat{P} \subsetneq \widehat{I}$, 
or $\widehat{I} \subsetneq P'$ with $P' \in \ch{(\widehat{P})}$, $P'\neq P$. 

When $P' =P$, that is $\widehat{I} \subseteq P$,
we parametrize the cubes $I$ by their side-length: 
for $r\in \mathbb{N}$, we consider the collection of cubes $I\subset P$ such that $\ell(I)=2^{-r}\ell(P)$.

We note that $J\in I_{-e,m}$ implies $J\subset I_m=mI\setminus (m-1)I\subset mI$. 
Now, if $r>\log (2m)$, we have $\ell(I)=2^{-r}\ell(P)<(2m)^{-1}\ell(P)$ and so
$\ell(mI)= m\ell(I)\leq 2^{-1}\ell(P)<\ell(P)$. 
This inequality and the fact that 
$I\subset P$ imply that $J_{I}'\subset J\subset I_m\subset mI \subset 3P \subset E$. 

With this we get 
$\mathbbm{1}_{J_I'}(x)=0$ for all 
$x\notin E$ and so, these cubes $I$ do not contribute to $\| S_{1}b_P \|_{L^1(\mathbb R^n \setminus E, \mu)}$. 

We are then left with the cubes $I\in \mathcal D$ such that $\widehat{I}=\widehat{P}$ or 
$\widehat{I}\subseteq P$, with $\ell(I)=2^{-r}\ell(P)$, and 
$0\leq r\leq \log{m}$. We parametrize such cubes as follows: for $0\leq r\leq \log{m}$, we denote by $\mathcal P_{r}$ the collection of the cubes $I\in \mathcal D$ with $\widehat{I}\subset P$, $\ell(I)=2^{-r}\ell(P)$. 
Moreover, for every fixed $x\in \mathbb R^n\setminus E$ and every fixed $r\geq 0$ there is a quantity comparable to $m^{n-1}$ of cubes $I\in \mathcal P_{m,r}$ such that their corresponding cubes $I_m$ all satisfy $x\in I_m$.
Then
\begin{align*}
\| S_1b_{P}&\|_{L^1(\mathbb R^n \setminus E,\mu)}
\\
&= 
\Big\| \Big( \sum_{m\geq 2}\frac{1}{m^{\alpha +\delta}} \sum_{r=0}^{\log m}\sum_{I\in \mathcal P_{r}}
a_{I}
|\langle b_{P},h_{I}\rangle |^2
\frac{1}{\ell(I)^{\alpha}}\sum_{J\in I_{-e,m}}
\frac{\mu(J)}{\mu_{\epsilon}(J'_I)}
\mathbbm{1}_{J'_I}
\Big)^{\frac{1}{2}}\Big\|_{L^1(\mathbb R^n \setminus E, \mu)}
\\
&\leq  
\Big\| \sum_{m\geq 2}\frac{1}{m^{\frac{\alpha +\delta}{2}}} \sum_{r=0}^{\log m}\sum_{I\in \mathcal P_{r}}
a_{I}^{\frac{1}{2}}
|\langle b_{P},h_{I}\rangle |
\frac{1}{\ell(I)^{\frac{\alpha}{2}}}\sum_{J\in I_{-e,m}}
\frac{\mu(J)^{\frac{1}{2}}}{\mu_{\epsilon}(J'_I)^{\frac{1}{2}}}
\mathbbm{1}_{J'_I}\Big\|_{L^1(\mathbb R^n \setminus E, \mu)}
\\
&\leq  
\sum_{m\geq 2}\frac{1}{m^{\frac{\alpha +\delta}{2}}} \sum_{r=0}^{\log m}\sum_{I\in \mathcal P_{r}}
a_{I}^{\frac{1}{2}}
|\langle b_{P},h_{I}\rangle |
\frac{1}{\ell(I)^{\frac{\alpha}{2}}}\sum_{J\in I_{-e,m}}\mu(J)^{\frac{1}{2}}
\mu(J'_I)^{\frac{1}{2}}. 
\end{align*}
We used in the last inequality that $\mu(J'_I)\leq \mu_{\epsilon}(J'_I)$. By using that 
$\mu(J'_I)\leq \frac{\mu(I)\mu(J)}{\mu(Q)}$,  and $I_{-e,m}\neq \emptyset$ only if
$m\leq \lambda=2^{N}\ell(P)$, 
we get
 \begin{align*} 
\| S_1b_{P}\|_{L^1(\mathbb R^n \setminus E,\mu)}
&\leq  
\sum_{m\geq 2}\frac{1}{m^{\frac{\alpha +\delta}{2}}} \sum_{r=0}^{\log m}\sum_{I\in \mathcal P_{r}}
a_{I}^{\frac{1}{2}}
|\langle b_{P},h_{I}\rangle | \frac{\mu(I)^{\frac{1}{2}}}{\mu(Q)^{\frac{1}{2}}}
\frac{1}{\ell(I)^{\frac{\alpha}{2}}}\sum_{J\in I_{-e,m}}\mu(J).
\end{align*}
Now we use the following facts: that the cubes $J\in I_{-e,m}$ are pairwise disjoint with union 
$I_m=(mI\setminus (m-1)I)\cap P$; 
$\ell(I)=2^{r}\ell(P)$ for $I\in \mathcal P_{m,r}$ and so it is independent of $m$; 
and Fubini's inequality. 
With all this, we write 
 \begin{align*}
\| S_1b_{P}\|_{L^1(\mathbb R^n \setminus E,\mu)}
&\leq  
\sum_{m\geq 2}\frac{1}{m^{\frac{\alpha +\delta}{2}}} \sum_{r=0}^{\log m}\sum_{I\in \mathcal P_{r}}
a_{I}^{\frac{1}{2}}|\langle b_{P},h_I\rangle|\mu(I)^{\frac{1}{2}} 
\frac{1}{\mu(Q)^{\frac{1}{2}}}
\frac{1}{\ell(I)^{\frac{\alpha}{2}}}\mu(I_m)
\\
&\leq  \sum_{r\geq 0}\sum_{I\in \mathcal P_{ r}}
a_{I}^{\frac{1}{2}}
|\langle b_{P},h_I\rangle|\mu(I)^{\frac{1}{2}} 
\frac{1}{\ell(I)^{\frac{\alpha}{2}}}\sum_{m=2^r}^{\lambda}\frac{1}{m^{\frac{\alpha +\delta}{2}}}\mu(I_m). 
\end{align*}

Since by $\mu(mI)\leq \mu(Q)$, by using Lemma \ref{Abel} again we have 
\begin{align*}
\frac{1}{\mu(Q)^{\frac{1}{2}}}
\frac{1}{\ell(I)^{\frac{\alpha}{2}}}\sum_{m= 2^r}^{\lambda }\frac{1}{m^{\frac{\alpha +\delta}{2}}}
\mu(I_m)
&\lesssim \frac{1}{\mu(Q)^{\frac{1}{2}}}
\frac{1}{\ell(I)^{\frac{\alpha}{2}}}
\sum_{m\geq 2^r} \frac{\mu(mI\cap Q)}{m^{\frac{\alpha}{2}} }\frac{1}{m^{\frac{\delta}{2}+1}}
\\
&\lesssim 
\sum_{m\geq 2^r} \frac{\mu(mI)^{\frac{1}{2}}}{\ell(mI)^{\frac{\alpha}{2}} }\frac{1}{m^{\frac{\delta}{2}+1}}
\\
&
= \sum_{m\geq 1}\frac{\mu((m+2^r)I)^{\frac{1}{2}}}{\ell((m+2^r)I)^{\frac{\alpha}{2}} }\frac{1}{(m+2^r)^{\frac{\delta }{2}+1}}
\\
&
\leq 2^{-\frac{\delta r}{4}}\sum_{m\geq 1}\frac{\mu((m+2^r)I)}{\ell((m+2^r)I)^{\alpha} }\frac{1}{(m+2^r)^{\frac{1}{2}(\frac{\delta }{2}+1)}}
\\
&
\lesssim 2^{\frac{-\delta r}{4}}\rho(I). 
\end{align*}
Then
\begin{align*}
\| S_1b_{P}\|_{L^1(\mathbb R^n \setminus E,\mu)}
&\lesssim 
\sum_{r\geq 0} 2^{-\frac{\delta r}{4}}
\sum_{I\in \mathcal P_{r}}
a_{I}^{\frac{1}{2}}\rho_{\textrm{out}}(I)
|\langle b_{P},h_I\rangle|\mu(I)^{\frac{1}{2}}
\\
&\lesssim a^{\frac{1}{2}}
\sum_{r\geq 0} 2^{-\frac{\delta r}{4}} 
\sum_{I\in \mathcal P_{r}}
|\langle b_{P},h_I\rangle|\mu(I)^{\frac{1}{2}}.
\end{align*}
Since $\widehat{I}\subseteq P$, inequality \eqref{dual1Q} stands as 
$ |\langle b_{P},h_I\rangle|\lesssim \mu(I)^{-\frac{1}{2}}\|f\mathbbm{1}_{\widehat{I}\cap P}\, \|_{L^1(\mu)}$. Moreover, 
for each fixed $r$, the cubes $I\in \mathcal P_{r}$ are such that the union of their parents $\widehat{I}$ is included in 
$4P$
With this, 
\begin{align*}
\| S_1b_{P}\|_{L^1(\mathbb R^n \setminus E,\mu)}
&\lesssim a^{\frac{1}{2}}
\sum_{r\geq 0} 2^{-\frac{\delta r}{4}} 
\sum_{I\in \mathcal P_{r}}
\|f\mathbbm{1}_{\widehat{I}\cap P}\, \|_{L^1(\mu)}. 
\end{align*}
We now group together all cubes with the same parent and use that, for fixed $r\geq 0$, the cubes in $K\in \mathcal D$ with $K\subseteq P$ and $\ell(K)=2^{-r+1}\ell(P)$ are pairwise disjoint, to write 
\begin{align*}
\sum_{I\in \mathcal P_{r}}
\|f\mathbbm{1}_{\widehat{I}\cap P}\, \|_{L^1(\mu)}
&=\sum_{\substack{K\in \mathcal D\\ K\subseteq P\\ \ell(K)=2^{-r+1}\ell(P)}}\sum_{\substack{I\in \mathcal D\\ \widehat{I}=K}}
\|f\mathbbm{1}_{K\cap P}\, \|_{L^1(\mu)}
\\&
\lesssim 
\sum_{\substack{K\in \mathcal D\\ K\subseteq P\\ \ell(K)=2^{-r+1}\ell(P)}}
\|f\mathbbm{1}_{K\cap P}\, \|_{L^1(\mu)}\leq \|f\mathbbm{1}_{P}\, \|_{L^1(\mu)}. 
\end{align*}
Then
\begin{align*}
\| S_1b_{P}\|_{L^1(\mathbb R^n \setminus E,\mu)}
&\lesssim a^{\frac{1}{2}}
\sum_{r\geq 0} 2^{-\frac{\delta r}{4}} \|f\mathbbm{1}_{P}\, \|_{L^1(\mu)}\lesssim a \|f\mathbbm{1}_{P}\, \|_{L^1(\mu)}. 
\end{align*}
Finally, since the cubes $P\in \mathcal P$ are pairwise disjoint by maximality, we  have 
\begin{align*}
\|S_{1}b \|_{L^1(\mathbb R^n \setminus E, \mu)}
&\lesssim a^{\frac{1}{2}}\sum_{P\in \mathcal P}
\|f\mathbbm{1}_{P}\|_{L^1(\mu)}
\leq a^{\frac{1}{2}}
\|f\|_{L^1(\mu)}.
\end{align*}

5) The reasoning for $S_{1}'b$ is very similar and so, we only sketch the calculations to highlight the few differences. 
We remind that we denote by $J_e\in \mathcal D$ the cube such that $J\subset J_e$ and $\ell(J_e)=2^e\ell(J)$. Then
\begin{align*}
\| S_{1}'b_{P}\|_{L^1(\mathbb R^n \setminus E,\mu)}
&\hspace{-.1cm}= \hspace{-.1cm}
\Big\| \Big( \sum_{m\geq 2}\frac{1}{m^{\alpha +\delta}} \sum_{r=0}^{\log m}\hspace{-.1cm}\sum_{J\in \mathcal P_{r}}
b_{J}
|\langle b_{P},h_{J}\rangle |^2
\frac{1}{\ell(J_e)^{\alpha}}\sum_{I}
\frac{\mu(I)}{\mu_{\epsilon}(J'_I)}
\mathbbm{1}_{J'_I}
\Big)^{\frac{1}{2}}\Big\|_{L^1(\mathbb R^n \setminus E, \mu)}
\\
&\leq  
\sum_{m\geq 2}\frac{1}{m^{\frac{\alpha +\delta}{2}}} \sum_{r=0}^{\log m}\sum_{J\in \mathcal P_{r}}
b_{J}^{\frac{1}{2}}
|\langle b_{P},h_{J}\rangle |
\frac{1}{\ell(J_e)^{\frac{\alpha}{2}}}\sum_{I}\mu(I)^{\frac{1}{2}}
\mu(J'_I)^{\frac{1}{2}}. 
\end{align*}
Since $\mu(J'_I)\leq \frac{\mu(I)\mu(J)}{\mu(Q)}$,  and $J_{e,m}\neq \emptyset$ only if
$m\leq \lambda=2^{N}\ell(P)$, 
we have
 \begin{align*}
\| S_1'b_{P}\|_{L^1(\mathbb R^n \setminus E,\mu)}
&\leq  
\sum_{m\geq 2}\frac{1}{m^{\frac{\alpha +\delta}{2}}} \sum_{r=0}^{\log m}\sum_{J\in \mathcal P_{r}}
b_{J}^{\frac{1}{2}}
|\langle b_{P},h_{J}\rangle | \frac{\mu(J)^{\frac{1}{2}}}{\mu(Q)^{\frac{1}{2}}}
\frac{1}{\ell(J_e)^{\frac{\alpha}{2}}}\sum_{I}\mu(I)
\\
&= 
 \sum_{r\geq 0}\sum_{J\in \mathcal P_{r}}
b_{J}^{\frac{1}{2}}
|\langle b_{P},h_{J}\rangle | \frac{\mu(J)^{\frac{1}{2}}}{\mu(Q)^{\frac{1}{2}}}
\frac{1}{\ell(J_e)^{\frac{\alpha}{2}}}\sum_{m=2^r}^{\lambda}\frac{1}{m^{\frac{\alpha +\delta}{2}}}\mu((J_e)_m), 
\end{align*}
with $(J_e)_m=(mJ_e\setminus (m-1)J_e)\cap P$. 
Since by $\mu((J_e)_m)\leq \mu(Q)$, by using Lemma \ref{Abel} again we have 
\begin{align*}
\frac{1}{\mu(Q)^{\frac{1}{2}}}
\frac{1}{\ell(J_e)^{\frac{\alpha}{2}}}\sum_{m= 2^r}^{\lambda }\frac{1}{m^{\frac{\alpha +\delta}{2}}}
\mu((J_e)_m)
&\lesssim \frac{1}{\mu(Q)^{\frac{1}{2}}}
\frac{1}{\ell(J_e)^{\frac{\alpha}{2}}}
\sum_{m\geq 2^r} \frac{\mu(mJ_e)}{m^{\frac{\alpha}{2}}}\frac{1}{m^{\frac{\delta}{2}+1}}
\\
&\lesssim \sum_{m\geq 2^r} \frac{\mu(mJ_e)^{\frac{1}{2}}}{\ell(mJ_e)^{\frac{\alpha}{2}} }\frac{1}{m^{\frac{\delta}{2}+1}}
\\
&
\leq 2^{-\frac{\delta r}{4}}\sum_{m\geq 1}\frac{\mu((m+2^r)J_e)}{\ell((m+2^r)J_e)^{\alpha} }\frac{1}{(m+2^r)^{\frac{1}{2}(\frac{\delta }{2}+1)}}
\\
&
\lesssim 2^{\frac{-\delta r}{4}}\rho_{\textrm{out}}(J_e). 
\end{align*}
By applying the same reasoning we used before and $b_{I}^{\frac{1}{2}}\rho(J_I)
\leq b^{\frac{1}{2}}$, we gradually get 
\begin{align*}
\| S_1'b_{P}\|_{L^1(\mathbb R^n \setminus E,\mu)}
&\lesssim 
\sum_{r\geq 0} 2^{-\frac{\delta r}{4}}
\sum_{J\in \mathcal P_{r}}
b_{J}^{\frac{1}{2}}\rho_{\textrm{out}}(J_e)
|\langle b_{P},h_J\rangle|\mu(J)^{\frac{1}{2}}
\\
&\lesssim b^{\frac{1}{2}}
\sum_{r\geq 0} 2^{-\frac{\delta r}{4}} 
\sum_{J\in \mathcal P_{r}}
|\langle b_{P},h_J\rangle|\mu(J)^{\frac{1}{2}}
\lesssim b^{\frac{1}{2}} \sum_{r\geq 0} 2^{-\frac{\delta r}{4}}
\sum_{I\in \mathcal P_{r}}
\|f\mathbbm{1}_{\widehat{I}}\, \|_{L^1(\mu)}
\\
&
\lesssim b^{\frac{1}{2}}\sum_{r\geq 0} 2^{-\frac{\delta r}{4}}
\|f\mathbbm{1}_{P}\|_{L^1(\mu)}
\lesssim b^{\frac{1}{2}} \|f\mathbbm{1}_{P}\|_{L^1(\mu)}.
\end{align*}

Finally, by disjointness of the cubes $P\in \mathcal P$, 
\begin{align*}
\| S_1'b\|_{L^1(\mathbb R^n \setminus E, \mu)}
&\leq \sum_{P\in \mathcal P}
\| S_1'b_{P}\|_{L^1(\mathbb R^n \setminus E, \mu)}
\lesssim b^{\frac{1}{2}}\sum_{P\in \mathcal P}
\|f\mathbbm{1}_{P}\|_{L^1(\mu)}
\leq b^{\frac{1}{2}}\|f\|_{L^1(\mu)}.
\end{align*}

6) It only remains to deal with $S_3$. We start by noting that, since $J'_I\subset J_I\subset I$, we can apply the same reasoning we used for $S_0$ to reduce the sum in cubes $I\in \mathcal D$, to the much smaller sum in cubes $I\in \child{P}$. Then
\begin{align*}
\| S_3b&\|_{L^1(\mathbb R^n \setminus E, \mu)} 
\\
&
\leq \sum_{P\in \mathcal P}
\Big\| \Big( \sum_{I\in {\mathcal D}_i}
a_{I}
|\langle b_P,h_{I}\rangle |^2
\hspace{-.1cm}\sum_{R\in \{I,\hat{I}\}}
\sum_{k=2^{\theta e}}^{2^{e}}\sum_{J\in I_{-e,1,k}}
\hspace{-.1cm}
\frac{\mu(R\cap J)}{\mu(R)}
\frac{\mathbbm{1}_{J'_I}(x)}{\mu_{\epsilon}(J_I')}
\Big)^{\frac{1}{2}}\Big\|_{L^1(\mathbb R^n \setminus E, \mu)}
\\
&\leq \sum_{P\in \mathcal P}
\Big\|  \sum_{I \in \child(P)}
a_{I}^{\frac{1}{2}}
|\langle b_P,h_{I}\rangle |
\sum_{R\in \{I,\hat{I}\}}
\sum_{k=2^{\theta e}}^{2^{e}}\sum_{J\in I_{-e,1,k}}
\Big(\frac{\mu(R\cap J)}{\mu(R)}\Big)^{\frac{1}{2}}
\frac{\mathbbm{1}_{J'_I}(x)}{\mu_{\epsilon}(J_I')^{\frac{1}{2}}}
\Big\|_{L^1(\mathbb R^n \setminus E, \mu)}
\\
&\leq a^{\frac{1}{2}}\sum_{P\in \mathcal P}
 \sum_{I\in \child(P)}
|\langle b_P,h_{I}\rangle |
\sum_{R\in \{I,\hat{I}\}}
\frac{1}{\mu(R)^{\frac{1}{2}}}
\sum_{k=2^{\theta e}}^{2^{e}}\sum_{J\in I_{-e,1,k}}
\mu(R\cap J)^{\frac{1}{2}}
\frac{\mu(J'_I)}{\mu_{\epsilon}(J_I')^{\frac{1}{2}}}.
\end{align*}
We now use the inequalities $\frac{\mu(J_I')}{\mu_{\epsilon}(J_I')^\frac{1}{2}}= \big(\frac{\mu(J_I')}{\mu_{\epsilon}(J_I')}\big)^\frac{1}{2}
\mu(J_I')^\frac{1}{2}\leq \mu(J_I')^\frac{1}{2}$, and $\mu(J_I')\leq \mu(J)$, and the fact 
that the cubes $J\in I_{-e,1,k}$ and the cubes $J_I'\subseteq J_I\subset I$ with $J\in I_{-e,1,k}$ both define collections of pairwise disjoint cubes,
to obtain 
\begin{align*}
\sum_{k=2^{\theta e}}^{2^{e}}\sum_{J\in I_{-e,1,k}}
\mu(R\cap J)^{\frac{1}{2}}
\frac{\mu(J'_I)}{\mu_{\epsilon}(J_I')^{\frac{1}{2}}}
&\leq \sum_{k=2^{\theta e}}^{2^{e}}\sum_{J\in I_{-e,1,k}}
\mu(R\cap J)^{\frac{1}{2}}
\mu(J_I')^{\frac{1}{2}}
\\
&\leq \Big(\sum_{k=2^{\theta e}}^{2^{e}}\sum_{J\in I_{-e,1,k}}
\mu(R\cap J)\Big)^{\frac{1}{2}}
\Big(\sum_{k=2^{\theta e}}^{2^{e}}\sum_{J\in I_{-e,1,k}}\mu(J_I')\Big)^{\frac{1}{2}}
\\
&\leq 
\mu(R)^{\frac{1}{2}}\mu(I)^{\frac{1}{2}}. 
\end{align*} 
With this, 
\begin{align*}
\| S_3b\|_{L^1(\mathbb R^n \setminus E, \mu)} 
&\lesssim a^{\frac{1}{2}}\sum_{P\in \mathcal P}
\sum_{I\in \child(P)}
|\langle b_P,h_{I}\rangle | \sum_{R\in \{I,\hat{I}\}}
\frac{1}{\mu(R)^{\frac{1}{2}}}\mu(R)^{\frac{1}{2}}\mu(I)^{\frac{1}{2}}
\\
&
\lesssim a^{\frac{1}{2}}\sum_{P\in \mathcal P}
\sum_{I\in \child(P)}
|\langle b_P,h_{I}\rangle |\mu(I)^{\frac{1}{2}}
\\
&
\lesssim a^{\frac{1}{2}}\sum_{P\in \mathcal P}
\sum_{I\in \child(P)}
\|f\mathbbm{1}_{P}\|_{L^1(\mu)}
\leq a^{\frac{1}{2}}
\|f\|_{L^1(\mu)}.
\end{align*}


\section{A recursion process to dominate the square forms by a sparse form}

In this last section, we ended the proof of the main result in the paper, namely, Theorem \ref{mainresult}. 
Previously, in Proposition \ref{dualbysquare}, we had shown that the dual pair can be estimated by square forms and paraproducts: 
\begin{align}\label{estsquare2}
|\langle T_{\gamma, Q}f,g\rangle|
&\lesssim \sum_{i=1}^{k} 
B_i(f,g)
+ |\Pi_1^i(f,g)|+|\Pi_2^i(f,g)|
\\
&
\nonumber
+\sum_{\substack{I, J\in \mathcal D(Q)\\ \lambda(\widehat{I},\widehat{J}\,)\leq 1}}
|\langle f,h_{I}\rangle |
|\langle g, h_{J}\rangle |
|\langle Th_I,h_J\rangle |, 
\end{align}
where 
\begin{equation}\label{bilinearsquare2}
B_i(f,g)=\sum_{e\in \mathbb Z}
\hskip5pt 2^{-|e|\frac{\theta \delta }{2}}\Big( 
\langle S_{1,i}^{s_e}f,S_{1,i}^{-s_e}g\rangle 
+
\langle S_{2,i}^{s_e}f,S_{2,i}^{-s_e}g\rangle
+
\langle S_{3,i}^{s_e}f,S_{3,i}^{-s_e}g\rangle \Big) , 
\end{equation}
$S_{j,i}^k$ are the square functions of Definition \ref{square}, and $\Pi_j^i(f,g)$ are the paraproducts of Definition 
\ref{paraproducts}. 

Moreover, in Proposition \ref{estimate4paraproducts}, we estimated the paraproducts 
$\Pi_j^i(f,g)$ by sparse forms. The last term in previous estimate, which corresponds to the essentially attached cubes, is already a term of the sparse form. Then the only thing that 
remains to be done is to estimate the square form by sparse forms. 
By symmetry and summability of all series involved, this is achieved once we show a uniform estimate:
\begin{lemma} For all $i\in \{1, \ldots, k\}$, $j\in \{1,2,3\}$, $s_e\in \{1,-1\}$, we have 
$$
\langle S_{j,i}^{s_e}f,S_{j,i}^{-s_e}g\rangle \lesssim \Lambda_{\mathcal S} (f,g). 
$$
\end{lemma}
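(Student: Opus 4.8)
## Proof strategy for the square-form domination lemma

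The plan is to run a recursive stopping-time argument on the sparse family $\mathcal S$ (already constructed in Section 5) that exploits the $L^2$ and weak-$L^1$ boundedness of the square functions proved in the previous section. Fix $i$, $j$, $s_e$ and write $S=S_{j,i}^{s_e}$, $S'=S_{j,i}^{-s_e}$. By Cauchy--Schwarz, $\langle Sf, S'g\rangle \le \|Sf\|_{L^2(\mu)}\|S'g\|_{L^2(\mu)} \lesssim a^{1/2}\|f\|_{L^2(\mu)}\|g\|_{L^2(\mu)}$, but this global bound is of the wrong form; the point is to localize it. For a cube $S\in\mathcal S$ I would split the pairing over the partition $\{\mathcal U(S')\}_{S'\in\mathcal S}$ of $\mathcal D$ (defined at the end of Section 5): writing $f = \sum_{S'\in\mathcal S} f_{S'}$ where $f_{S'}$ collects the Haar pieces $\langle f,h_I\rangle h_I$ with $I\in\mathcal U(S')$, and similarly for $g$, the bilinear form $\langle Sf,S'g\rangle$ becomes a double sum over pairs $(S',S'')\in\mathcal S\times\mathcal S$. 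The kernel-type decay built into the square functions (the cubes $I$ contributing to $S$ at a given location lie in a bounded dilate, by the indicators $\mathbbm 1_{3I}$, $\mathbbm 1_{J_I'}$ etc.) should force the off-diagonal pairs to be summable, reducing matters to the diagonal $S'=S''$.

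Next I would estimate the diagonal term $\langle S f_{S'}, S' g_{S'}\rangle$ localized to the region governed by $S'$. Here is where the stopping-time structure pays off: by the selection condition \eqref{selectiondef}, for every $I\in\mathcal U(S')$ one has $\langle |f|\rangle_I \le C\langle |f|\rangle_{S'}$ and $\langle |g|\rangle_I \le C\langle |g|\rangle_{S'}$, so the Haar coefficients $\langle f,h_I\rangle$ appearing in $f_{S'}$ are controlled in an $L^2$-averaged sense by $\langle |f|\rangle_{S'}\mu(S')^{1/2}$ — indeed $\sum_{I\in\mathcal U(S')}|\langle f,h_I\rangle|^2 \lesssim \langle |f|\rangle_{S'}^2 \mu(S')$, which is exactly the kind of estimate already carried out for the quantity $h_1,\dots,h_5$ in the proof of Proposition \ref{estimate4paraproducts}. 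Combining this with the $L^2$-boundedness $\|S f_{S'}\|_{L^2(\mu)} \lesssim a^{1/2}\big(\sum_{I\in\mathcal U(S')}a_I|\langle f,h_I\rangle|^2\big)^{1/2}$ (and the analog for $S'g_{S'}$) gives, after Cauchy--Schwarz,
$$
\langle S f_{S'}, S' g_{S'}\rangle \lesssim a\,\langle |f|\rangle_{S'}\,\langle |g|\rangle_{S'}\,\mu(S'),
$$
and summing over $S'\in\mathcal S$ yields precisely $\Lambda_{\mathcal S}(f,g)$ with bounded coefficients $a_{S'}\lesssim a$. The weak-$L^1$ bound of Proposition \ref{weakL1} is what is needed to make the off-diagonal/tail pieces genuinely small (interpolating against the $L^2$ bound, or running a Calderón--Zygmund decomposition at each scale as in that proposition), so that the recursion closes rather than merely reproducing a weaker form.

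The main obstacle I anticipate is controlling the \emph{off-diagonal interaction} between the cube $S'$ supporting the $f$-piece and a different cube $S''$ supporting the $g$-piece, because $S$ and $S'$ are not literally singular integral operators and the usual "kernel decay $\Rightarrow$ geometric summation over scales" heuristic must be re-derived from the explicit indicator functions and the factors $m^{-(\alpha+\delta)}$, $2^{-|e|\theta\delta/2}$, $k^{-\delta}$ in Definition \ref{square}. One must show that when $S''$ is far from $S'$ (or much larger/smaller), the product $S f_{S'}(x)\, S' g_{S''}(x)$ is supported on a set whose $\mu$-measure, weighted by these decay factors, is summable against $\langle|f|\rangle_{S'}\langle|g|\rangle_{S''}$ in a way that telescopes back into $\Lambda_{\mathcal S}$. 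I expect this to require the power-growth density $\rho$ and Lemma \ref{Abel} once more, together with the observation — used repeatedly in Section 7 — that for a cube $I$ the companion set $J_I'$ satisfies $\mu(J_I')\le \mu(I)\mu(J)/\mu(Q)$, which is exactly the device that converts "measure of a far region" into a product of local averages. Once the off-diagonal sum is absorbed, the diagonal estimate above finishes the proof, and feeding this lemma back into \eqref{estsquare2} together with Proposition \ref{estimate4paraproducts} completes the proof of Theorem \ref{mainresult}.
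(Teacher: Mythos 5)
The paper does not decompose $f$ and $g$ into stopping-cube pieces and does not produce a double sum over $\mathcal S\times\mathcal S$. Instead it runs the standard Lerner-style \emph{good-$\lambda$ exceptional-set recursion} directly in physical space: for each stopping cube $R\in\mathcal S$ it localizes the square functions to $\mathcal D_i(R)$, defines
\begin{equation*}
E_R=\{x\in R : S_R f(x)>Ca_R^{\frac12}\langle|f\mathbbm 1_R|\rangle_R\}\cup\{x\in R : S_R'g(x)>Ca_R^{\frac12}\langle|g\mathbbm 1_R|\rangle_R\},
\end{equation*}
uses the weak-$L^1$ bound of Proposition \ref{weakL1} to show $\mu(E_R)<\mu(R)/2$, bounds $\int_{R\setminus E_R}S_Rf\cdot S_R'g\,d\mu\le C^2a_R\langle|f|\rangle_R\langle|g|\rangle_R\mu(R)$ directly from the level-set thresholds, and then recurses inside $E_R$ by descending to the maximal $R'\in\mathcal S(R)$. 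There is no diagonal/off-diagonal dichotomy in that argument at all.

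Your proposal is therefore a genuinely different route, and it has a real gap that you flag but do not close: the off-diagonal terms $\int Sf_{S'}\cdot S'g_{S''}\,d\mu$ with $S''\subsetneq S'$. Because the square functions are nonnegative, expanding $Sf\le\sum_{S'}Sf_{S'}$ (an $\ell^2\le\ell^1$ loss) destroys all cancellation, and for nested pairs the supports of $Sf_{S'}$ and $S'g_{S''}$ overlap on a bounded dilate of $S''$; the decay factors $m^{-(\alpha+\delta)}$, $2^{-|e|\theta\delta/2}$, $k^{-\delta}$ give no gain in the nesting parameter, and a sparse family admits chains of arbitrary depth, so the double sum does not converge by these means. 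The device $\mu(J_I')\le\mu(I)\mu(J)/\mu(Q)$ handles far-field pairs, not nested ones. Your diagonal estimate is correct (it is precisely the bound $\sum_{I\in\mathcal U(S')}|\langle f,h_I\rangle|^2\lesssim\langle|f|\rangle_{S'}^2\mu(S')$ proved for the paraproducts), and you even sense that Proposition \ref{weakL1} must enter --- but the way it enters is via the good-$\lambda$ cut that removes a half-measure exceptional set and restarts the recursion one level down, which sidesteps the nested interaction entirely rather than trying to sum it.
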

\begin{proof} To obtain the stated estimate, we use recursive process that constructs the sparse form $\Lambda (f,g)$. The argument is standard, so we only describe the main steps of the recursion step. 

For fixed $i,j, s_e$, we simplify the notation by writing $S_{j,i}^{s_e}f=Sf$ and $S_{j,i}^{-s_e}f=S'f$. 
We localize the square forms as follows: for each selected cube $R\in \mathcal S$, we define square function by using the expressions of Definition \ref{square}, but changing the grid $\mathcal D_i$ by $\mathcal D_i(R)$. For example, the first square function is 
\begin{align*}
S_Rf=S_{1,i, R}^1f
&=\Big( \sum_{\substack{m\in \mathbb Z\\ m\geq 2}}\frac{1}{m^{\alpha +\delta}}\sum_{I\in \mathcal D_i(R)}
a_{I}
|\langle f,h_{I}\rangle |^2
\frac{1}{\ell(I)^{\alpha}}
\sum_{J \in I_{-e,m}}\frac{\mu(J)}{\mu_{\epsilon }(J_I')}\mathbbm{1}_{J_I'} 
\Big)^{\frac{1}{2}}. 
\end{align*}
We note that $S_{1,i, R}^1f=S_{1,i, R}^1(f\mathbbm{1}_{R})$. 
Similarly, we define 
$S_{R}'$, 
and $B_{R}(f,g)=\langle S_{R}f,S_{R}'g\rangle $. 
We also define the local exceptional set 
\begin{align}\label{excep}
E_{R}&=\{ x\in S / S_{R}f(x)>C a_R^{\frac{1}{2}}
\langle |f\mathbbm{1}_{R}|\rangle_S\}
\cup \{ x\in R / S_{R}'g(x)>C a_R^{\frac{1}{2}}\langle |g\mathbbm{1}_{R}|\rangle_{R}\}.
\end{align}
where $a =\sup_{\substack{m\in \mathbb N\\e\in \mathbb Z}}\sup_{I\in \mathcal D}(1+\rho(2^eI))^2(1+\rho(I))^2\sup_{J\in I_{e,m}}F(I,J)$. 

The 
same way we proved that $S, S'$ are bounded, we can show that  
$\| S_R\|_{L^{1}(\mu)\rightarrow L^{1,\infty }( \mu)}+\| S_R'\|_{L^{1}(\mu)\rightarrow L^{1,\infty }( \mu)}\lesssim a_R^{\frac{1}{2}}$, we have 
\begin{align}\label{eqsparse}
\nonumber
\mu(E_{R})&
\leq \frac{\| S_Rf\|_{L^1(\mu)}}{C  a_R^{\frac{1}{2}}\langle |f\mathbbm{1}_{R}|\rangle_R}
+\frac{\| S_R'f\|_{L^1(\mu)}}{Ca_R^{\frac{1}{2}} \langle |g\mathbbm{1}_{R}|\rangle_{R}}
\\
&
\leq 
\frac{\| f\mathbbm{1}_{R}\|_{L^1(\mu)}}{C  \langle |f\mathbbm{1}_{R}|\rangle_R}
+\frac{\| g\mathbbm{1}_{R}\|_{L^1(\mu)}}{C \langle |g\mathbbm{1}_{R}|\rangle_{R}}
\leq 2\frac{\mu(Q)}{C}
<\frac{\mu(R)}{2}, 
\end{align}
by the choice of $C$. Now
\begin{align*}
B_{R}(f,g)&=
\int_{R\setminus E_R} S_{R}f(x)S_{R}'g(x)d\mu(x) 
+
\int_{E_R} S_{R}f(x)S_{R}'g(x)d\mu(x).
\end{align*}

For the first term, we use \eqref{excep} to write 
\begin{align*}
\int_{R\setminus E} S_{R}f(x)S_{R}'g(x)d\mu(x) 
&\leq C^2a_{R}\langle |f|\rangle_R \langle |g|\rangle_{R}\mu(R)\lesssim \Lambda_{\mathcal S} (f,g),  
\end{align*}
since $R\in \mathcal{S}$.

To deal with the second term, we 
define 
$\mathcal E_{R}$ as the collection of cubes $Q'\in \mathcal S(R)$ such that 
that are maximal with respect to inclusion. 
Then, since $\mathcal E_{Q}$ is a partition of $E_{S}$, we have
%
\begin{align*}
B_R(f,g)
&\lesssim  a_{R}\langle |f|\rangle_R \langle |g|\rangle_{R}
\mu(R)+\sum_{R'\in \mathcal{S}(R)}B_{R'}(f,g). 
\end{align*}

Now we can repeat the same argument on $B_{R'}(f,g)$ and continue the recursion process. 


\end{proof}

\begin{bibdiv}
\begin{biblist}
\bib{B2018}{article}{
title={A Fefferman-Stein inequality for the Carleson operator},
author={D. Beltran},
journal={Rev. Mat. Iberoam.},
volume={34},
date={2018},
number={1},
pages={221--244}
}

\bib{BC2020}{article}{
title={Sparse bounds for pseudodifferential operators},
author={D. Beltran},
author={L. Cladek},
journal={Anal. Math.},
volume={140},
date={2020},
number={1},
pages={89--116}
}

\bib{BEO}{article}{
title={Classes of weights related to Schrodinger operators},
author={B. Bongioanni},
author={E. Harboure},
author={O. Salinas},
journal={J. Math. Anal. Appl.},
volume={373},
date={2011},
pages={563-579}
}

\bib{CO2018}{article}{
title={Sparse domination of Hilbert transforms along curves},
author={L. Cladek},
author={Y. Ou},
journal={Math. Res. Lett.},
volume={25},
date={2018},
number={2},
pages={415--436}
}


\bib{CAR2016}{article}{
title={A pointwise estimate for positive dyadic shifts and some applications},
author={J. M. Conde-Alonso},
author={G. Rey},
journal={Math. Ann.},
volume={365},
date={2016},
number={3-4},
pages={1111--1135}
}

\bib{CCPO2017}{article}{
title={A sparse domination principle for rough singular integrals},
author={J. M. Conde-Alonso},
author={A. Culiuc},
author={F. Di Plinio},
author={Y. Ou}
journal={Anal. PDE},
volume={10(5)},
date={2017},
pages={1255--1284}
}

\bib{CPO2018}{article}{
title={Domination of multilinear singular integrals by positive sparse forms},
author={A. Culiuc},
author={F. Di Plinio},
author={Y. Ou},
journal={J. Lond. Math. Soc.},
volume={98},
date={2018},
number={2},
pages={369–-392}
}

\bib{PDU2018}{article}{
title={Positive sparse domination of variational Carleson operators},
author={F. Di Plinio},
author={Y. Q. Do},
author={G. N. Uraltsev},
journal={JAnn. Sc. Norm. Super. Pisa Cl. Sci.},
volume={5},
date={2018},
number={4},
pages={1443–-1458}
}

\bib{H2020}{article}{
title={Sparse domination of singular Radon transform},
author={B. Hu},
journal={J. Math. Pures Appl.},
volume={139},
date={2020},
pages={235--316}
}

\bib{HMW1973}{article}{
title={Weighted norm inequalities for the conjugate function and Hilbert transform},
author={R. Hunt},
author={B. Muckenhoupt},
author={R. Wheeden},
journal={Trans. Amer. Math. Soc.},
volume={176},
date={1973},
pages={227--251}
}


\bib{H2012}{article}{
title={The sharp weighted bound for general Calder\'on-Zygmund operators},
author={T. P. Hyt\"onen},
journal={Ann. of Math. (2)},
volume={175},
date={2012},
number={3},
pages={1473--1506}
}

\bib{HRT2017}{article}{
title={Quantitative weighted estimates for rough homogeneous singular integrals},
author={T. Hyt\"onen},
author={L. Roncal},
author={O. Tapiola},
journal={Israel J. Math.},
volume={218},
number={1},
date={2017},
pages={133--164}
}

\bib{L2017}{article}{
title={An elementary proof of the $A_2$ bound},
author={M. T. Lacey},
journal={Israel J. Math.},
volume={217},
date={2017},
pages={181--195}
}

\bib{LM2017}{article}{
title={The sparse $T1$ Theorem},
author={M. T. Lacey},
author={D. Mena},
journal={Israel J. Math.},
volume={217},
date={2017},
pages={181--195}
}

\bib{LMR2019}{article}{
title={Sparse bounds for Bochner-Riesz multipliers},
author={M. T. Lacey},
author={D. Mena},
author={M. C. Reguera},
journal={J. Fourier Anal. Appl.},
volume={25},
number={2},
date={2019},
pages={523–-537}
}

\bib{L2019}{article}{
title={Sparse bounds for spherical maximal functions},
author={M. T. Lacey},
journal={J. Anal. Math.},
volume={139},
number={2},
date={2019},
pages={613–-635}
}
Sparse bounds for spherical maximal functions, J. Anal. Math. 139 (2019), no. 2, 613–635.

\bib{L2013}{article}{
title={On an estimate of Calder\'on-Zygmund operators by dyadic positive operators},
author={A. K. Lerner},
journal={J. Anal. Math.},
volume={121},
date={2013},
pages={141--161}
}

\bib{L2013-2}{article}{
title={A simple proof of the $A_2$ conjecture},
author={A. K. Lerner},
journal={Int. Math. Res. Not.},
volume={\,},
date={2013},
number={14},
pages={3159--3170}
}

\bib{LN2019}{article}{
title={Intuitive dyadic calculus: the basics},
author={A. K. Lerner},
author={F. Nazarov},
journal={Expo. Math.},
volume={37},
date={2019},
number={3},
pages={225--265}
}

\bib{L2016}{article}{
title={On pointwise estimates involving sparse operators},
author={A. K. Lerner},
journal={New York J. Math.},
volume={22},
date={2016},
pages={341–-349}
}



\bib{M2012}{article}{
title={Sharp weighted bounds without testing or extrapolation},
author={K. Moen},
journal={Arch. Math. (Basel)},
volume={99},
date={2012},
number={5},
pages={457--466}
}

\bib{NTV1997}{article}{
title={Cauchy integral and Calderón-Zygmund operators on nonhomogeneous spaces},
author={F. Nazarov},
author={S. Treil},
author={A. Volberg},
journal={International Mathematics Research Notices},
volume={15},
date={1997},
pages={703--726}
}




\bib{SVW2021}{article}{
title={Sparse domination results for compactness on weighted spaces.},
author={C. Stockdale},
author={P. Villarroya},
author={B. Wick},
journal={Collect. Math.},
date={2021},
}

Stockdale C.

\bib{V2015}{article}{
title={A characterization of compactness for singular integrals},
author={P. Villarroya},
journal={J. Math. Pures Appl.},
volume={104},
date={2015},
pages={485--532}
}


\bib{V2022}{article}{
title={New local $T1$ theorems on non-homegeneous spaces},
author={P. Villarroya},
journal={ArXiv},
volume={},
date={},
number={}
}
\end{biblist}
\end{bibdiv}

\end{document}